\newtheorem{theorem}{Theorem}[section]
\newtheorem{lemma}[theorem]{Lemma}
\newtheorem{proposition}[theorem]{Proposition}
\newtheorem{corollary}[theorem]{Corollary}
\newtheorem{remark}[theorem]{Remark}
\begin{document}

\title{A homotopy decomposition of the fibre of the squaring map on $\Omega^3S^{17}$}
\author{Steven Amelotte}

\begin{abstract}
We use Richter's $2$-primary proof of Gray's conjecture to give a homotopy decomposition of the fibre $\Omega^3S^{17}\{2\}$ of the $H$-space squaring map on the triple loop space of the $17$-sphere. This induces a splitting of the mod-$2$ homotopy groups $\pi_\ast(S^{17}; \mathbb{Z}/2\mathbb{Z})$ in terms of the integral homotopy groups of the fibre of the double suspension $E^2:S^{2n-1} \to \Omega^2S^{2n+1}$ and refines a result of Cohen and Selick, who gave similar decompositions for $S^5$ and $S^9$. We relate these decompositions to various Whitehead products in the homotopy groups of mod-$2$ Moore spaces and Stiefel manifolds to show that the Whitehead square $[i_{2n}, i_{2n}]$ of the inclusion of the bottom cell of the Moore space $P^{2n+1}(2)$ is divisible by $2$ if and only if $2n=2, 4, 8$ or $16$.
\end{abstract}

\maketitle

\section{Introduction}

For a based loop space $\Omega X$, let $\Omega X\{k\}$ denote the homotopy fibre of the $k^{th}$ power map $k:\Omega X\to \Omega X$. In \cite{S Odd} and \cite{S Decomposition}, Selick showed that after localizing at an odd prime $p$, there is a homotopy decomposition $\Omega^2S^{2p+1}\{p\} \simeq \Omega^2S^3\langle 3\rangle \times W_p$, where $S^3\langle 3\rangle$ is the $3$-connected cover of $S^3$ and $W_n$ is the homotopy fibre of the double suspension $E^2:S^{2n-1}\to \Omega^2S^{2n+1}$. Since $\Omega^2S^{2p+1}\{p\}$ is homotopy equivalent to the pointed mapping space $\mathrm{Map}_\ast(P^3(p), S^{2p+1})$ and the degree $p$ map on the Moore space $P^3(p)$ is nullhomotopic, an immediate consequence is that $p$ annihilates the $p$-torsion in $\pi_\ast(S^3)$ when $p$ is odd. In \cite{S Reformulation}, Ravenel's solution to the odd primary Arf--Kervaire invariant problem \cite{Ra} was used to show that, at least for $p \ge 5$, similar decompositions of $\Omega^2S^{2n+1}\{p\}$ are not possible if $n \ne 1$ or $p$. 

The $2$-primary analogue of Selick's decomposition, namely that there is a $2$-local homotopy equivalence $\Omega^2S^5\{2\} \simeq \Omega^2S^3\langle 3\rangle \times W_2$, was later proved by Cohen \cite{C 2-primary}. Similarly, since $\Omega^2S^5\{2\}$ is homotopy equivalent to $\mathrm{Map}_\ast(P^3(2), S^5)$ and the degree $4$ map on $P^3(2) \simeq \Sigma\mathbb{R}P^2$ is nullhomotopic, this product decomposition gives a ``geometric" proof of James' classical result that $4$ annihilates the $2$-torsion in $\pi_\ast(S^3)$. Unlike the odd primary case however, for reasons related to the divisibility of the Whitehead square $[\iota_{2n-1},\iota_{2n-1}] \in \pi_{4n-3}(S^{2n-1})$, the fibre of the squaring map on $\Omega^2S^{2n+1}$ admits nontrivial product decompositions for some other values of $n$.

First, in their investigation of the homology of spaces of maps from mod-$2$ Moore spaces to spheres, Campbell, Cohen, Peterson and Selick \cite{CCPS} found that if $2n+1 \ne 3, 5, 9$ or $17$, then $\Omega^2S^{2n+1}\{2\}$ is atomic and hence indecomposable. Following this, it was shown in \cite{CS} that after localization at the prime $2$, there is a homotopy decomposition $\Omega^2S^9\{2\} \simeq BW_2 \times W_4$ and $W_4$ is a retract of $\Omega^3S^{17}\{2\}$. Here $BW_n$ denotes the classifying space of $W_n$ first constructed by Gray \cite{G}. Since $BW_1$ is known to be homotopy equivalent to $\Omega^2S^3\langle 3\rangle$, the pattern suggested by the decompositions of $\Omega^2S^{5}\{2\}$ and $\Omega^2S^{9}\{2\}$ led Cohen and Selick to conjecture that $\Omega^2S^{17}\{2\} \simeq BW_4 \times W_8$. In this note we prove this is true after looping once. (This weaker statement was also conjectured in \cite{C Fibration}.)

\begin{theorem} \label{mainthm}
There is a $2$-local homotopy equivalence $\Omega^3S^{17}\{2\} \simeq W_4\times \Omega W_8$.
\end{theorem}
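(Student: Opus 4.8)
The plan is to work after localization at $2$ throughout and to exploit the $H$-space structure that $\Omega^3S^{17}\{2\}$ carries as the fibre of the squaring ($H$-)map on the triple loop space $\Omega^3S^{17}$. A first reduction is to observe that the squaring map on $\Omega^3S^{17}$ is homotopic to the loop of the squaring map on $\Omega^2S^{17}$ (the two $H$-structures on the double loop space agree up to homotopy), so that $\Omega^3S^{17}\{2\}\simeq \Omega\bigl(\Omega^2S^{17}\{2\}\bigr)$, and that $\Omega^2S^{17}\{2\}\simeq \mathrm{Map}_\ast(P^3(2),S^{17})$ as in the introduction. This is also why it is natural to prove the looped statement: after looping, the $BW_4$ of the conjecture becomes $W_4$, for which a retraction is already available, whereas splitting $\Omega^2S^{17}\{2\}$ itself remains open. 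I would construct the two factors as maps into $\Omega^3S^{17}\{2\}$, multiply them with the loop multiplication, and check the result on mod-$2$ homology.

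For the first factor I would use the result of \cite{CS}, that $W_4$ is a retract of $\Omega^3S^{17}\{2\}$: fix a section $s\colon W_4\to \Omega^3S^{17}\{2\}$ and a retraction $r$, arranged to be loop maps. Since $r$ is then an $H$-map between loop spaces admitting a loop-map section, the fibration $F\to \Omega^3S^{17}\{2\}\xrightarrow{\,r\,}W_4$ splits as an $H$-space product $\Omega^3S^{17}\{2\}\simeq F\times W_4$, where $F=\mathrm{fib}(r)$. Everything then reduces to identifying $F\simeq \Omega W_8$.

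The key input is Richter's $2$-primary proof of Gray's conjecture, applied with $n=8$. Looping the Gray fibration $S^{15}\xrightarrow{E^2}\Omega^2S^{17}\to BW_8$ gives a fibration $\Omega S^{15}\to \Omega^3S^{17}\to W_8$, and Gray's conjecture supplies the compatibility needed to push this structure through the squaring map, producing a map $g\colon \Omega W_8\to \Omega^3S^{17}\{2\}$ whose composite with $r$ is null, hence a factorization $\Omega W_8\to F$. The subtlety is that the obstruction to such a lift is measured by the failure of the double suspension $E^2$ to commute with the relevant power maps up to the Whitehead square $[i_{16},i_{16}]$ of the bottom cell inclusion of $P^{17}(2)$; this is precisely the term controlled by Gray's conjecture, and it explains why the construction succeeds for $2n=16$ but not for generic $n$, where $\Omega^2S^{2n+1}\{2\}$ is atomic by \cite{CCPS}. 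I expect this middle step to be the main obstacle: constructing $g$ and controlling the Whitehead square correction is exactly where the depth of Richter's theorem is needed.

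To conclude I would turn to mod-$2$ homology. The homology of $\Omega^2S^{2n+1}\{2\}$ is computed in \cite{CCPS}, and that of $W_n$, hence of $W_4$ and of $\Omega W_8$, is known from \cite{G}, so the Poincaré series of $W_4\times \Omega W_8$ and of $\Omega^3S^{17}\{2\}$ already agree. I would check that $\Omega W_8\to F$ is surjective onto the algebra generators of $H_\ast(F;\mathbb{Z}/2)$, so that together with the matching Poincaré series it is an isomorphism of Hopf algebras, and then invoke the mod-$2$ Whitehead theorem (all the spaces in sight being simply connected and $2$-local) to upgrade this to an equivalence $\Omega W_8\simeq F$. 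Combined with the splitting of the second paragraph this yields $\Omega^3S^{17}\{2\}\simeq W_4\times \Omega W_8$. The homological bookkeeping, though lengthy, is routine once the computations of \cite{CCPS} and \cite{G} are in hand; the real content lies in the construction of $g$.
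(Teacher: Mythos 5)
Your overall strategy---split off the $W_4$ retract supplied by \cite{CS} and identify the complementary factor with $\Omega W_8$ via Richter's theorem---is the right one, and is essentially the paper's. But the proposal has a genuine gap at exactly the point you flag as ``the main obstacle'': you never actually construct the map relating $\Omega W_8$ to $\Omega^3S^{17}\{2\}$, and the ingredients you point to for doing so are the wrong ones. The relevant input is not the first Gray fibration $S^{15}\xrightarrow{E^2}\Omega^2S^{17}\to BW_8$, and the Whitehead square $[i_{16},i_{16}]$ plays no role here (it belongs to an entirely separate circle of ideas, treated in Section 3 of the paper). What Richter's theorem actually gives is the \emph{second} Gray fibration $BW_4\xrightarrow{j}\Omega^2S^{17}\xrightarrow{\phi_4}S^{15}$ together with the identity $E^2\circ\phi_4\simeq 2$. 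Pulling back the column $W_8\to S^{15}\xrightarrow{E^2}\Omega^2S^{17}$ against the column $\Omega^2S^{17}\{2\}\to\Omega^2S^{17}\xrightarrow{2}\Omega^2S^{17}$ (the squares commute precisely because $E^2\circ\phi_4\simeq 2$) produces a fibration $BW_4\to\Omega^2S^{17}\{2\}\to W_8$, which loops to $W_4\xrightarrow{\Omega\tau_4}\Omega^3S^{17}\{2\}\xrightarrow{\pi}\Omega W_8$. This is an elementary formal consequence of Richter's theorem, not a delicate obstruction-theoretic construction; without it (or something equivalent) your argument does not get off the ground. Note also that once this fibration is in hand you do not need a map \emph{into} $\Omega^3S^{17}\{2\}$ at all: a left homotopy inverse $r$ of $\Omega\tau_4$ gives $(r,\pi)\colon\Omega^3S^{17}\{2\}\to W_4\times\Omega W_8$, an equivalence by the five lemma.

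Two secondary problems. First, you assert that the section and retraction from \cite{CS} can be ``arranged to be loop maps''; nothing in \cite{CS} provides this, and it is not needed (the splitting follows from the long exact sequence as above). Second, and more seriously, you must check that the retraction $r$ is compatible with the fibre inclusion $W_4\to\Omega^3S^{17}\{2\}$ coming from the Richter fibration---i.e.\ that $r\circ\Omega\tau_4$ is a self-equivalence of $W_4$---since \cite{CS} only gives a retraction off \emph{some} copy of $W_4$. You simply assert that the composite of your $g$ with $r$ is null. The paper handles this by showing both maps are nonzero on the $13$-dimensional bottom cell (for $\Omega\tau_4$ this uses a Bockstein argument showing $\tau_4$ lifts $j$) and then invoking the homological recognition lemma of \cite{CCPS} applied to $\Omega^2S^9\{2\}\simeq BW_2\times W_4$; some such verification is unavoidable. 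Your concluding homological step (Poincar\'e series of $F=\mathrm{fib}(r)$, Hopf algebra isomorphism, mod-$2$ Whitehead theorem) is plausible in outline but is rendered unnecessary by the argument above.
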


In addition to the exponent results mentioned above, decompositions of $\Omega^mS^{2n+1}\{p\}$ also give decompositions of homotopy groups of spheres with $\mathbb{Z}/p\mathbb{Z}$ coefficients. Recall that the mod-$p$ homotopy groups of $X$ are defined by $\pi_k(X; \mathbb{Z}/p\mathbb{Z}) = [P^k(p), X]$.

\begin{corollary}
$\pi_k(S^{17}; \mathbb{Z}/2\mathbb{Z}) \cong \pi_{k-4}(W_4) \oplus \pi_{k-3}(W_8)$ for all $k\ge 4$.
\end{corollary}

In section 3 we relate the problem of decomposing $\Omega^2S^{2n+1}\{2\}$ to a problem considered by Mukai and Skopenkov in \cite{MSk} of computing a certain summand in a homotopy group of the mod-$2$ Moore space $P^{2n+1}(2)$---more specifically, the problem of determining when the Whitehead square $[i_{2n}, i_{2n}] \in \pi_{4n-1}(P^{2n+1}(2))$ of the inclusion of the bottom cell $i_{2n}:S^{2n} \to P^{2n+1}(2)$ is divisible by $2$. The indecomposability result for $\Omega^2S^{2n+1}\{2\}$ in \cite{CCPS} (see also \cite{C Course}) was proved by showing that for $n>1$ the existence of a spherical homology class in $H_{4n-3}(\Omega^2S^{2n+1}\{2\})$ imposed by a nontrivial product decomposition implies the existence of an element $\theta \in \pi_{2n-2}^S$ of Kervaire invariant one such that $\theta\eta$ is divisible by $2$, where $\eta$ is the generator of the stable $1$-stem $\pi_1^S$. Such elements are known to exist only for $2n=4, 8$ or $16$. We show that the divisibility of the Whitehead square $[i_{2n},i_{2n}]$ similarly implies the existence of such Kervaire invariant elements to obtain the following.

\begin{theorem}
The Whitehead square $[i_{2n}, i_{2n}] \in \pi_{4n-1}(P^{2n+1}(2))$ is divisible by $2$ if and only if $2n=2,4,8$ or $16$.
\end{theorem}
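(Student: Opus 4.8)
The plan is to characterise the $2$-divisibility of $[i_{2n},i_{2n}]$ in terms of a spherical homology class in $\mathrm{Map}_\ast(P^3(2),S^{2n+1})\simeq\Omega^2S^{2n+1}\{2\}$, and then to play off the Kervaire-invariant obstruction of \cite{CCPS} against the product decompositions that are available when $2n+1\in\{3,5,9,17\}$. First I would record the two elementary facts that frame the computation. Writing $P^{2n+1}(2)=S^{2n}\cup_2 e^{2n+1}$ with cofibration $S^{2n}\xrightarrow{2}S^{2n}\xrightarrow{i_{2n}}P^{2n+1}(2)\xrightarrow{q}S^{2n+1}$, naturality of the Whitehead product gives $[i_{2n},i_{2n}]=(i_{2n})_\ast[\iota_{2n},\iota_{2n}]$, while $i_{2n}\circ 2\simeq\ast$ shows that $(i_{2n})_\ast$ annihilates $2\pi_{4n-1}(S^{2n})$. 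Consequently the divisibility in question is genuinely a phenomenon of the Moore space and need not reflect divisibility of $[\iota_{2n},\iota_{2n}]$ in $\pi_{4n-1}(S^{2n})$: for $2n=4$ the class $[\iota_4,\iota_4]$ is not $2$-divisible in $\pi_7(S^4)$, yet its image $[i_4,i_4]$ will be $2$-divisible in $\pi_7(P^5(2))$. I would therefore begin by locating $[i_{2n},i_{2n}]$ inside the summand of $\pi_{4n-1}(P^{2n+1}(2))$ isolated by Mukai and Skopenkov \cite{MSk}, using the EHP analysis of $P^{2n+1}(2)=\Sigma^{2n-1}\mathbb{R}P^2$ together with the metastable exact sequence of the cofibration.

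For the forward implication I would adapt the obstruction argument of \cite{CCPS}. A relation $[i_{2n},i_{2n}]=2\alpha$ yields, via the adjunction $P^{2n+1}(2)=\Sigma P^{2n}(2)$ and the second James--Hopf invariant, a map $P^{4n}(2)\to S^{2n+1}$ with nontrivial mod-$2$ Hopf invariant, that is, a spherical class in degree $4n-3$ of $\Omega^2S^{2n+1}\{2\}$. The restriction of this map to the bottom cell is an element of $\pi_{4n-1}(S^{2n+1})=\pi^S_{2n-2}$, and feeding the spherical class into the functional Steenrod-operation analysis of \cite{CCPS} forces that element to be a $\theta\in\pi^S_{2n-2}$ of Kervaire invariant one whose extension across the top cell of $P^{4n}(2)$ requires $\theta\eta$ to be divisible by $2$. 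I expect this passage to be the main obstacle: one must make the unstable-to-stable bookkeeping precise—tracking the EHP filtration of $[i_{2n},i_{2n}]$ and the Hopf invariants of the associated Moore-space maps—so that it is the \emph{divisibility of the Whitehead square}, and not merely the existence of a spherical homology class in the mapping space, that produces the Kervaire element.

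The admissible dimensions are then cut out by stable homotopy theory. Elements of Kervaire invariant one occur only for $2n-2\in\{2,6,14,30,62,126\}$, and among the corresponding $\theta_j$ the product $\theta_j\eta$ is divisible by $2$ only for $j\le 3$ (for instance $\theta_1\eta=\eta^3=4\nu$ and $\theta_2\eta=\nu^2\eta=0$, since $\eta\nu=0$), which leaves $2n\in\{4,8,16\}$ together with the degenerate case $2n=2$, where $[\iota_2,\iota_2]=2\eta$ gives the divisibility directly.

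For the converse I would reverse the construction of the second paragraph: the required spherical class in degree $4n-3$ exists precisely when $\Omega^2S^{2n+1}\{2\}$ admits a nontrivial product decomposition, and such decompositions are available for $2n+1=3,5$ and $9$ from the equivalences recalled in the introduction (culminating in $\Omega^2S^9\{2\}\simeq BW_2\times W_4$ of \cite{CS}) and for $2n+1=17$ from Theorem~\ref{mainthm}. In each of these four cases the spherical class halves $[i_{2n},i_{2n}]$ in $\pi_{4n-1}(P^{2n+1}(2))$, matching the explicit generators of \cite{MSk}. Combining the two implications gives the stated equivalence.
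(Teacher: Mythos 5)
Your proposal rests on an equivalence that is never established: that $[i_{2n},i_{2n}]$ is divisible by $2$ if and only if $H_{4n-3}(\Omega^2S^{2n+1}\{2\})$ contains a spherical class. In the forward direction you flag the ``unstable-to-stable bookkeeping'' as the main obstacle and then leave it undone; the difficulty is genuine, because a relation $[i_{2n},i_{2n}]=2\alpha$ most naturally produces a map $P^{4n}(2)\to\Omega P^{2n+2}(2)$ (extending the adjoint of $\Sigma\alpha$), and passing from the Moore-space target to the sphere $S^{2n+1}$ while controlling the James--Hopf invariant is exactly where the content lies. The converse direction is in worse shape: no mechanism is given for transporting a spherical class in $\mathrm{Map}_\ast(P^3(2),S^{2n+1})$ back to a division of a Whitehead square in $\pi_{4n-1}(P^{2n+1}(2))$; the claim that ``the spherical class halves $[i_{2n},i_{2n}]$'' is simply asserted. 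Moreover, for $2n=16$ the decomposition you invoke (Theorem \ref{mainthm}) is of $\Omega^3S^{17}\{2\}$, not $\Omega^2S^{17}\{2\}$: it yields a spherical class in $H_{28}(\Omega^3S^{17}\{2\})$, and deducing one in $H_{29}(\Omega^2S^{17}\{2\})$ requires the class to survive the homology suspension, which is not automatic --- indeed, whether $\Omega^2S^{17}\{2\}$ itself decomposes is precisely what remains open. (Two smaller points: $\eta^3=12\nu$, not $4\nu$, though still divisible by $2$; and the nondivisibility of $\theta_4\eta$ and $\theta_5\eta$ is a nontrivial input that needs a citation, not just the existence table for Kervaire classes.)

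The paper's route avoids all of this and, notably, never uses Theorem \ref{mainthm}. The pivot is Proposition \ref{Whitehead products}: $[i_{2n},i_{2n}]$ is divisible by $2$ if and only if the Whitehead product $[i'_{2n-1},j'_{2n}]\in\pi_{4n-2}(V_{2n+1,2};\mathbb{Z}/2\mathbb{Z})$ vanishes, proved by an $EHP$-sequence computation together with Mukai's identification of the attaching map of the top cell of $V_{2n+1,2}$. That vanishing is the obstruction to extending the skeletal inclusions to a map $S^{2n-1}\times P^{2n}(2)\to V_{2n+1,2}$; Selick's results in \cite{S Indecomposability} rule such maps out for $2n\ne 2,4,8,16$ and supply them for $2n=2,4,8$, and the boundary case $2n=16$ is settled directly by the computation $[i_{16},i_{16}]=2\widetilde{\sigma}_{16}^2$ in $\pi_{31}(P^{17}(2))$ from \cite{MSh}. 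If you want to pursue your line, you would in effect have to prove the full chain of equivalences in Theorem \ref{TFAE} anyway, at which point the Stiefel-manifold formulation is the cleaner intermediary.
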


This will follow from a preliminary result (Proposition \ref{Whitehead products}) equating the divisibility of $[i_{2n}, i_{2n}]$ with the vanishing of a Whitehead product in the mod-$2$ homotopy of the Stiefel manifold $V_{2n+1,2}$, i.e., the unit tangent bundle over $S^{2n}$. It is shown in \cite{S Indecomposability} that there do not exist maps $S^{2n-1}\times P^{2n}(2) \to V_{2n+1,2}$ extending the inclusions of the bottom cell $S^{2n-1}$ and bottom Moore space $P^{2n}(2)$ if $2n \ne 2,4,8$ or $16$. When $2n=2,4$ or $8$, the Whitehead product obstructing an extension is known to vanish for reasons related to Hopf invariant one, leaving only the boundary case $2n=16$ unresolved. We find that the Whitehead product is also trivial in this case.

\section{The decomposition of $\Omega^3S^{17}\{2\}$}

The proof of Theorem \ref{mainthm} will make use of the $2$-primary version of Richter's proof of Gray's conjecture, so we begin by reviewing this conjecture and spelling out some of its consequences. In his construction of a classifying space of the fibre $W_n$ of the double suspension, Gray \cite{G} introduced two $p$-local homotopy fibrations
\[
S^{2n-1} \xrightarrow{\mathmakebox[0.5cm]{E^2}} \Omega^2S^{2n+1} \xrightarrow{\mathmakebox[0.5cm]{\nu}} BW_n
\]

\[
BW_n \xrightarrow{\mathmakebox[0.5cm]{j}} \Omega^2S^{2np+1} \xrightarrow{\mathmakebox[0.5cm]{\phi}} S^{2np-1}
\]
with the property that $j \circ \nu \simeq \Omega H$, where $H: \Omega S^{2n+1} \to \Omega S^{2np+1}$ is the $p^{th}$ James--Hopf invariant. In addition, Gray showed that the composite $BW_n \xrightarrow{j} \Omega^2S^{2np+1} \xrightarrow{p} \Omega^2S^{2np+1}$ is nullhomotopic and conjectured that the composite $\Omega^2S^{2np+1} \xrightarrow{\phi} S^{2np-1} \xrightarrow{E^2} \Omega^2S^{2np+1}$ is homotopic to the $p^{th}$ power map on $\Omega^2S^{2np+1}$. This was recently proved by Richter in \cite{Ri}.

\begin{theorem}[\cite{Ri}] \label{Richter}
For any prime $p$, there is a homotopy fibration \[BW_n \xrightarrow{\mathmakebox[0.5cm]{j}} \Omega^2 S^{2np+1} \xrightarrow{\mathmakebox[0.5cm]{\phi_n}} S^{2np-1}\] such that $E^2 \circ \phi_n \simeq p$.
\end{theorem}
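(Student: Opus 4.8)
The plan is to work $p$-locally and to read the desired relation as an equality in the group $[M,M]$ of homotopy classes of self-maps of the group-like, homotopy-commutative $H$-space $M := \Omega^2 S^{2np+1} = \Omega^2\Sigma^2 S^{2np-1}$. Accordingly I would form the difference $d := E^2\circ\phi_n - p$ and try to show $d \simeq \ast$. The first step is to see that $d$ dies on the fibre of Gray's fibration: since $\phi_n\circ j \simeq \ast$ for consecutive maps, we get $E^2\circ\phi_n\circ j \simeq \ast$, while Gray's nullhomotopy $p\circ j\simeq\ast$ disposes of the power map; because precomposition with $j$ is additive on $[-,M]$, it follows that $d\circ j \simeq \ast$. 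Thus all of the content of $d$ is concentrated ``over the base'' $S^{2np-1}$.

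Next I would nail down $d$ in the critical dimension and then in homology. On the fundamental class $\pi_{2np-1}(M)\cong\mathbb{Z}_{(p)}$ the double suspension $E^2$ is an isomorphism and the power map induces multiplication by $p$; a homotopy-group analysis of Gray's fibration together with the identification $j\circ\nu\simeq\Omega H$ of $\phi_n$ with the $p^{th}$ James--Hopf invariant shows $(\phi_n)_\ast$ is also multiplication by $p$ there, so $(E^2\circ\phi_n)_\ast = p_\ast$ and $d$ is trivial on $\pi_{2np-1}$. To push this through the rest of $M$, I would use that $H_\ast(M;\mathbb{Z}/p)$ is the free object on the fundamental class under the Pontryagin product, Browder bracket and Dyer--Lashof operations; tracking the behaviour of $E^2\circ\phi_n$ and of $p$ across this generating set (again via the EHP and James--Hopf data) should show that $d$ is trivial on mod-$p$ homology.

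The real obstacle is the passage from ``$d$ kills $j$ and vanishes on homology'' to ``$d\simeq\ast$''. A fibration sequence does not dualize to a cofibration sequence, so $d\circ j\simeq\ast$ does not formally factor $d$ through the base, and a self-map of $M$ is not determined by its effect on a homology generating set without an infinite-loop structure that $M$ does not possess. I would therefore frame the final step as obstruction theory along the Postnikov (or James) filtration of $M$, where the successive obstructions to a nullhomotopy of $d$ lie in cohomology of $M$ with coefficients in its own homotopy groups. I expect this to be exactly the point at which the prime $2$ demands a separate treatment: at odd primes the Steenrod-module structure forces these groups to vanish in the relevant range, whereas at $p=2$ the analogous obstruction groups are nonzero, and controlling them is the genuinely hard, $2$-primary heart of the theorem.
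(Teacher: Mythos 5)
This statement is not proved in the paper at all: it is Gray's conjecture, imported as Theorem~\ref{Richter} with a citation to Richter's paper \cite{Ri}, so there is no internal argument for your proposal to match. Judged on its own terms, your proposal has a genuine gap, and it is exactly the one you flag yourself. The reductions you carry out --- forming $d = E^2\circ\phi_n - p$ in the group $[\Omega^2S^{2np+1},\Omega^2S^{2np+1}]$, observing $d\circ j\simeq\ast$ from $\phi_n\circ j\simeq\ast$ and Gray's nullhomotopy $p\circ j\simeq\ast$, and checking the bottom homotopy group --- are correct but were already available to Gray; they are not where the difficulty lies. The passage from ``$d\circ j\simeq\ast$ and $d$ trivial on homology'' to ``$d\simeq\ast$'' is the entire content of the conjecture, and the obstruction-theoretic scheme you propose does not close it: $\Omega^2S^{2np+1}$ has nontrivial homotopy and homology in arbitrarily high degrees, so the Postnikov/James filtration produces infinitely many obstruction groups $H^{k+1}(\Omega^2S^{2np+1};\pi_k(\Omega^2S^{2np+1}))$ with no vanishing mechanism at any prime (the odd-primary case was just as open as the $2$-primary one until recently, so the hoped-for dichotomy ``Steenrod structure kills the obstructions for $p$ odd'' is not available). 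There is also a smaller soft spot earlier: for maps into an $H$-space, $(f-g)_\ast$ is not $f_\ast-g_\ast$ except on primitives, so ``$d$ is trivial on mod-$p$ homology'' does not follow from tracking a generating set under the Dyer--Lashof/Browder operations.

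Richter's actual proof runs in the opposite direction from your plan: rather than taking Gray's fibration as given and trying to nullhomotope a difference of self-maps, he constructs the map $\phi_n:\Omega^2S^{2np+1}\to S^{2np-1}$ explicitly from a combinatorial (simplicial/configuration-space) model of the double loop space, arranged so that the relation $E^2\circ\phi_n\simeq p$ holds essentially by construction, and then identifies the homotopy fibre with $BW_n$ (recovering the compatibility with the James--Hopf invariant, e.g.\ the nullhomotopy of $\phi_n\circ\Omega H$ used in Lemma~\ref{S}). If you want to internalize this result rather than cite it, the construction of the map, not an obstruction-theoretic comparison of two given maps, is the step you would need to supply.
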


For odd primes, it was shown in \cite{T Anick's} that there is a homotopy fibration $\Omega W_{np} \to BW_n \to \Omega^2S^{2np+1}\{p\}$ based on the fact that a lift $\overline{S}:BW_n \to \Omega^2S^{2np+1}\{p\}$ of $j$ can be chosen to be an $H$-map when $p$ is odd. One consequence of Theorem \ref{Richter} is that this homotopy fibration exists for all primes and can be extended one step to the right by a map $\Omega^2S^{2np+1}\{p\} \to W_{np}$.

\begin{lemma} \label{fibration}
For any prime $p$, there is a homotopy fibration \[BW_n \xrightarrow{\mathmakebox[0.5cm]{}} \Omega^2S^{2np+1}\{p\} \xrightarrow{\mathmakebox[0.5cm]{}} W_{np}.\]
\end{lemma}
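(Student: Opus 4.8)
The plan is to produce the fibration directly from Richter's factorization of the $p$-th power map, using the standard relationship between the homotopy fibres of a composite and those of its factors. Recall from Theorem \ref{Richter} that $E^2 \circ \phi_n \simeq p$, where $p$ denotes the $p$-th power map on $\Omega^2 S^{2np+1}$; thus the power map factors, up to homotopy, as
\[
\Omega^2 S^{2np+1} \xrightarrow{\mathmakebox[0.5cm]{\phi_n}} S^{2np-1} \xrightarrow{\mathmakebox[0.5cm]{E^2}} \Omega^2 S^{2np+1}.
\]
I would exploit the general fact that for any composable pair of pointed maps $X \xrightarrow{h} Y \xrightarrow{g} Z$ there is a homotopy fibration $\mathrm{fib}(h) \to \mathrm{fib}(g\circ h) \to \mathrm{fib}(g)$ relating the three fibres. (This is the standard fibre-of-a-composite sequence; it follows from the pasting lemma for homotopy pullbacks, since $\mathrm{fib}(g\circ h)$ is the homotopy pullback of $X \xrightarrow{h} Y \xleftarrow{} \mathrm{fib}(g)$, whose projection onto $\mathrm{fib}(g)$ has homotopy fibre $\mathrm{fib}(h)$.)

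Applying this sequence to $h = \phi_n$ and $g = E^2$, it remains only to identify the three fibres. By Theorem \ref{Richter} the fibre of $\phi_n$ is $BW_n$; by the definition of $W_{np}$ as the fibre of the double suspension, $\mathrm{fib}(E^2) \simeq W_{np}$; and since homotopic maps have homotopy equivalent fibres, $\mathrm{fib}(E^2 \circ \phi_n) \simeq \mathrm{fib}(p) = \Omega^2 S^{2np+1}\{p\}$. Substituting these identifications into $\mathrm{fib}(\phi_n) \to \mathrm{fib}(E^2 \circ \phi_n) \to \mathrm{fib}(E^2)$ yields exactly
\[
BW_n \xrightarrow{\mathmakebox[0.5cm]{}} \Omega^2 S^{2np+1}\{p\} \xrightarrow{\mathmakebox[0.5cm]{}} W_{np},
\]
as desired.

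The argument is essentially formal once Theorem \ref{Richter} is available, so there is no single hard computational step; the one point requiring care is the identification $\mathrm{fib}(E^2 \circ \phi_n) \simeq \Omega^2 S^{2np+1}\{p\}$, which depends on the power map and the composite $E^2 \circ \phi_n$ agreeing only up to homotopy. I would make this precise by observing that a choice of homotopy $E^2 \circ \phi_n \simeq p$ induces a homotopy equivalence of the respective homotopy fibres compatible with the projections to $\Omega^2 S^{2np+1}$, so that the resulting sequence is genuinely the claimed fibration over $W_{np}$. Finally, I would note that looping this fibration recovers, for odd $p$, the fibration $\Omega W_{np} \to BW_n \to \Omega^2 S^{2np+1}\{p\}$ of \cite{T Anick's}, confirming that Lemma \ref{fibration} extends the latter one step to the right and now holds at every prime.
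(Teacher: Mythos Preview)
Your argument is correct and is essentially the paper's own: the paper draws the homotopy pullback diagram explicitly (pulling back $\phi_n$ along the fibre inclusion $W_{np}\to S^{2np-1}$ and comparing vertical fibres), while you package the same construction as the general fibre-of-a-composite sequence applied to $E^2\circ\phi_n\simeq p$. One small slip in your closing remark: the fibration $\Omega W_{np}\to BW_n\to \Omega^2 S^{2np+1}\{p\}$ of \cite{T Anick's} is recovered not by \emph{looping} your fibration but by extending it one step to the left via the connecting map of the fibre sequence.
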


\begin{proof}
The homotopy pullback of $\phi_n$ and the fibre inclusion $W_{np}\to S^{2np-1}$ of the double suspension defines a map $\Omega^2S^{2np+1}\{p\} \to W_{np}$ with homotopy fibre $BW_n$, which can be seen by comparing fibres in the homotopy pullback diagram
\begin{equation} \label{diagram2}
\begin{gathered}
\xymatrix{
BW_n \ar[r] \ar@{=}[d] & \Omega^2S^{2np+1}\{p\} \ar[r] \ar[d] & W_{np} \ar[d] \\
BW_n \ar[r]^-j & \Omega^2S^{2np+1} \ar[r]^-{\phi_n} \ar[d]^-p & S^{2np-1} \ar[d]^-{E^2} \\
& \Omega^2S^{2np+1} \ar@{=}[r] & \Omega^2S^{2np+1}.
}
\end{gathered}
\end{equation}
\end{proof}


Looping once, we obtain a homotopy fibration $$W_n \xrightarrow{\mathmakebox[0.5cm]{}} \Omega^3S^{2np+1}\{p\} \xrightarrow{\mathmakebox[0.5cm]{}} \Omega W_{np}$$ which we will show is split when $p=2$ and $n=4$.
We now fix $p=2$ and localize all spaces and maps at the prime $2$. Homology will be taken with mod-$2$ coefficients unless otherwise stated.

The next lemma describes a factorization of the looped second James--Hopf invariant, an odd primary version of which appears in \cite{T Anick's}. By a well-known result due to Barratt, $\Omega H: \Omega^2S^{2n+1} \to \Omega^2S^{4n+1}$ has order $2$ in the group $[\Omega^2S^{2n+1}, \Omega^2S^{4n+1}]$ and hence lifts to a map $\Omega^2S^{2n+1}\to \Omega^2S^{4n+1}\{2\}$. Improving on this, a feature of Richter's construction of the map $\phi_n$ is that the composite $\Omega^2S^{2n+1} \xrightarrow{\Omega H} \Omega^2S^{4n+1} \xrightarrow{\phi_n} S^{4n-1}$ is nullhomotopic \cite[Lemma 4.2]{Ri}. This recovers Gray's fibration $S^{2n-1} \xrightarrow{E^2} \Omega^2S^{2n+1} \xrightarrow{\nu} BW_n$ and the relation $j \circ \nu \simeq \Omega H$ since there then exists a lift $\nu:\Omega^2S^{2n+1} \to BW_n$ making the diagram
\[
\xymatrix{
& BW_n \ar[d]^j \\
\Omega^2S^{2n+1} \ar[r]^-{\Omega H} \ar[ur]^\nu & \Omega^2S^{4n+1}
}
\]
commute up to homotopy. Since $j$ factors through $\Omega^2S^{4n+1}\{2\}$, by composing the lift $\nu$ with the map $BW_n \to \Omega^2S^{4n+1}\{2\}$ we obtain a choice of lift $S:\Omega^2S^{2n+1} \to \Omega^2S^{4n+1}\{2\}$ of the looped James--Hopf invariant. Hence we have the following consequence of Richter's theorem.

\begin{lemma} \label{S}
There is a homotopy commutative diagram
\[
\xymatrix{
\Omega^2S^{2n+1} \ar[d]^\nu \ar[r]^-S & \Omega^2S^{4n+1}\{2\} \ar@{=}[d] \\
BW_n \ar[r] & \Omega^2S^{4n+1}\{2\}
}
\]
where $S$ is a lift of the looped second James--Hopf invariant $\Omega H: \Omega^2S^{2n+1} \to \Omega^2S^{4n+1}$ and the map $BW_n \to \Omega^2S^{4n+1}\{2\}$ has homotopy fibre $\Omega W_{2n}$.
\end{lemma}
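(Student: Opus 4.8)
The statement packages together three things—the lift $\nu$ of $\Omega H$ through $j$, the lift $S$ of $\Omega H$ through the squaring fibration, and the identification of the fibre of the bottom map—and since Theorem \ref{Richter}, Richter's nullhomotopy from \cite[Lemma 4.2]{Ri}, and Lemma \ref{fibration} are all in hand, the plan is to assemble these and then read off the fibre by rotating a fibration. First I would construct $\nu$: by \cite[Lemma 4.2]{Ri} the composite $\Omega^2S^{2n+1}\xrightarrow{\Omega H}\Omega^2S^{4n+1}\xrightarrow{\phi_n}S^{4n-1}$ is nullhomotopic, and by Theorem \ref{Richter} the map $\phi_n$ is the projection of a fibration with fibre inclusion $j:BW_n\to\Omega^2S^{4n+1}$, so a choice of nullhomotopy lifts $\Omega H$ to a map $\nu:\Omega^2S^{2n+1}\to BW_n$ with $j\circ\nu\simeq\Omega H$. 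This simultaneously recovers Gray's fibration $S^{2n-1}\xrightarrow{E^2}\Omega^2S^{2n+1}\xrightarrow{\nu}BW_n$.

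Next I would take the bottom map to be the specific fibre-comparison map $q:BW_n\to\Omega^2S^{4n+1}\{2\}$ produced in the proof of Lemma \ref{fibration}, namely the left-hand horizontal map of the top row of diagram (\ref{diagram2}) with $p=2$. By construction $q$ covers $j$, in the sense that composing $q$ with the projection $\Omega^2S^{4n+1}\{2\}\to\Omega^2S^{4n+1}$ recovers $j$. Setting $S:=q\circ\nu$ makes the square in the statement commute by definition, and projecting $S$ down to $\Omega^2S^{4n+1}$ gives $j\circ\nu\simeq\Omega H$, so $S$ is the required lift of the looped James--Hopf invariant. For the fibre, the top row of diagram (\ref{diagram2}) is the fibration $BW_n\xrightarrow{q}\Omega^2S^{4n+1}\{2\}\to W_{2n}$ of Lemma \ref{fibration}; rotating it one step to the left gives the fibre sequence $\Omega W_{2n}\to BW_n\xrightarrow{q}\Omega^2S^{4n+1}\{2\}$, which identifies the homotopy fibre of $q$ as $\Omega W_{2n}$.

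The one point I would take care over, and the closest thing to an obstacle here, is that this fibre identification requires $q$ to be exactly the map arising from the pullback in Lemma \ref{fibration}, not an arbitrary lift of $j$ through the squaring fibration: distinct lifts of $j$ need not have homotopic fibres, so I would be explicit that $q$ is the map of diagram (\ref{diagram2}). In contrast to the odd-primary argument of \cite{T Anick's}, this route does not require $q$ to be an $H$-map, since Theorem \ref{Richter} supplies the fibration of Lemma \ref{fibration} directly; hence no $H$-space subtleties at $p=2$ enter the argument.
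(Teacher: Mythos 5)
Your proposal is correct and follows essentially the same route as the paper: construct $\nu$ as a lift of $\Omega H$ through $j$ using Richter's nullhomotopy of $\phi_n\circ\Omega H$, define $S$ as the composite of $\nu$ with the map $BW_n\to\Omega^2S^{4n+1}\{2\}$ from Lemma \ref{fibration}, and read off the fibre $\Omega W_{2n}$ by rotating that fibration. Your added caution about using the specific map from diagram (\ref{diagram2}) rather than an arbitrary lift of $j$ is a sensible point that the paper leaves implicit.
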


The following homological result was proved in \cite{CCPS} and used to obtain the homotopy decompositions of \cite{C 2-primary} and \cite{CS}.

\begin{lemma}[\cite{CCPS}] \label{CCPS}
Let $n\ge 2$ and let $f:X\to \Omega^2S^{2n+1}\{2\}$ be a map which induces an isomorphism on the module of primitives in degrees $2n-2$ and $4n-3$. If the mod-$2$ homology of $X$ is isomorphic to that of $\Omega^2S^{2n+1}\{2\}$ as a coalgebra over the Steenrod algebra, then $f$ is a homology isomorphism.
\end{lemma}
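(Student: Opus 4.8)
The plan is to dualize and reduce the statement to a question about modules of indecomposables over the Steenrod algebra $\mathcal{A}$. Write $F = \Omega^2 S^{2n+1}\{2\}$ and work with mod-$2$ cohomology. The induced map $f^\ast\colon H^\ast(F)\to H^\ast(X)$ is a morphism of algebras over $\mathcal{A}$, and by hypothesis $H^\ast(X)\cong H^\ast(F)$ as such; dually, the condition that $f$ be an isomorphism on primitives in degrees $2n-2$ and $4n-3$ says exactly that the induced map on indecomposables $Qf^\ast\colon QH^\ast(F)\to QH^\ast(X)$ is an isomorphism in those two degrees. Since both cohomologies are connected and of finite type, it suffices to prove $f^\ast$ is surjective: a surjection of graded vector spaces of equal finite dimension in each degree is an isomorphism, and $f_\ast$ is then an isomorphism by duality. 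By the standard fact that a morphism of connected graded algebras surjective on indecomposables is itself surjective, it is enough to show $Qf^\ast$ is surjective.

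The crux is therefore a single structural computation about the target: that $QH^\ast(\Omega^2 S^{2n+1}\{2\})$ is generated as a module over $\mathcal{A}$ by two classes, in degrees $2n-2$ and $4n-3$. Granting this, the argument closes quickly. Because $f^\ast$ is $\mathcal{A}$-linear, $Qf^\ast$ is a map of $\mathcal{A}$-modules; the hypothesis makes it an isomorphism in degrees $2n-2$ and $4n-3$, so its image contains a generating set, and—matching these against the generators of $QH^\ast(X)$ supplied by the abstract isomorphism—$Qf^\ast$ is surjective, hence an isomorphism. This feeds back through the reduction above to give that $f^\ast$, and then $f_\ast$, is an isomorphism.

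To establish the structural computation I would analyse $H^\ast(F)$ through the fibration $\Omega^3 S^{2n+1}\to F\to\Omega^2 S^{2n+1}$ obtained by mapping the cofibre sequence $S^2\xrightarrow{2} S^2\to P^3(2)\to S^3$ into $S^{2n+1}$. A preliminary observation simplifies the analysis: since $\Omega^2 S^{2n+1}$ is homotopy commutative with mod-$2$ homology a polynomial algebra on odd-degree classes, the squaring map induces an algebra endomorphism whose reduced-diagonal contributions lie in even degrees and so must vanish on the odd-degree generators; thus the squaring map is zero on reduced homology, and the principal fibration $F\to\Omega^2 S^{2n+1}\xrightarrow{2}\Omega^2 S^{2n+1}$ yields an additive isomorphism $H^\ast(F)\cong H^\ast(\Omega^2 S^{2n+1})\otimes H^\ast(\Omega^3 S^{2n+1})$. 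The bottom classes of the two factors sit in degrees $2n-1$ and $2n-2$, while the Dyer--Lashof class $Q_1v$ of the triple-loop bottom class $v$ contributes the degree-$(4n-3)$ class. The real work is to pin down the $\mathcal{A}$-module structure on $QH^\ast(F)$ and to show every generator other than those in degrees $2n-2$ and $4n-3$ is redundant: that the degree-$(2n-1)$ base generator equals $Sq^1$ of the degree-$(2n-2)$ fibre class (a $k$-invariant computation for the fibration), and that the remaining Dyer--Lashof generators of both factors are reached by Steenrod operations from the two bottom classes via the Nishida relations. In particular the Nishida relations give $Sq^{2n-1}_\ast Q_1 v = 0$, confirming that the degree-$(4n-3)$ class is a genuinely independent second $\mathcal{A}$-generator rather than a consequence of the degree-$(2n-2)$ one.

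The main obstacle is precisely this last computation: controlling the Steenrod-module structure of $QH^\ast(F)$ in \emph{all} degrees and proving the collapse to two generators. The additive splitting $H^\ast(F)\cong H^\ast(\Omega^2 S^{2n+1})\otimes H^\ast(\Omega^3 S^{2n+1})$ has far more than two $\mathcal{A}$-generators on the nose, so the point is that the nontrivial $\mathcal{A}$-algebra extension recorded by the fibration's $k$-invariants binds the two tensor factors together; making this precise in every degree, rather than only in the low-degree range one can check by hand, is the delicate step. This homological computation is the substance of the lemma, after which the formal indecomposables argument above yields the conclusion.
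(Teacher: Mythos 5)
First, a caveat: the paper does not prove this lemma at all --- it is quoted verbatim from \cite{CCPS} --- so your attempt can only be measured against the computation in that reference, which this paper takes as a black box.

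Your formal reductions are fine (dualizing primitives to indecomposables, and using that an algebra map surjective on indecomposables is surjective), but the structural claim that your whole argument rests on --- that $Q H^\ast(\Omega^2 S^{2n+1}\{2\})$ is generated as an $\mathcal{A}$-module by two classes in degrees $2n-2$ and $4n-3$ --- is false, and not because the Nishida-relation bookkeeping is delicate: it already fails in degree $4n-4$. Write $F = \Omega^2 S^{2n+1}\{2\}$ and let $v \in H_{2n-2}(F)$ be the bottom primitive; by the standard computation (the one recorded by your additive splitting, which holds multiplicatively as well) $v^{2^j} \ne 0$ for all $j$. Since $F$ is $(2n-3)$-connected, $Sq^i_\ast v = 0$ for all $i>0$, and the Cartan formula then gives $\theta_\ast\bigl(v^{2^j}\bigr) = 0$ for every positive-degree $\theta \in \mathcal{A}$ and every $j \ge 1$: odd operations annihilate squares, and even ones produce terms of the form $\bigl(Sq^{i/2}_\ast(\cdot)\bigr)^2$, which vanish by induction. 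Primitives also pair trivially against decomposables, so the primitive $v^{2^j}$ annihilates the subspace $\mathcal{A}^{>0}\cdot H^\ast(F) + \mathrm{decomposables}$ in its degree. Equivalently, its dual class in $QH^{2^j(2n-2)}(F)$ --- the divided power $\gamma_{2^j}$ of the bottom cohomology class --- is not a positive-degree Steenrod operation on anything, even modulo decomposables. (In the lowest case this is just instability: the only candidate is $Sq^{2n-2}$ applied to the degree-$(2n-2)$ generator, which is the cup square, hence zero in $QH^\ast$.) So $QH^\ast(F)$ needs infinitely many $\mathcal{A}$-module generators, and any argument that retains only the $\mathcal{A}$-module structure of the indecomposables, as yours does, cannot reach these classes and cannot prove the lemma.

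The missing idea is that ``isomorphic as coalgebras over the Steenrod algebra'' is strictly stronger than the data you use, and the difference is exactly what is needed. Any map of spaces induces a map of coalgebras on mod-$2$ homology, and therefore commutes not only with the dual Steenrod operations but also with the Verschiebung $V$ (the dual of cup-squaring); both preserve primitives. On $H_\ast(F) \cong H_\ast(\Omega^2 S^{2n+1}) \otimes H_\ast(\Omega^3 S^{2n+1})$, whose primitives are spanned by the classes $g^{2^j}$ with $g$ a polynomial generator, $V$ collapses $g^{2^j}$ to $g^{2^{j-1}}$, and Nishida-type relations (for instance $Sq^1_\ast u = v$, $Sq^1_\ast Q_2 y = Q_1 y$, $Sq^1_\ast Q_1 u = u^2$) chain the generators of both tensor factors down to $v$ and $Q_1 v$ \emph{through squares}, which $V$ then collapses again. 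The correct detection statement is therefore: every nonzero primitive of $H_\ast(F)$ is carried nontrivially into degree $2n-2$ or $4n-3$ by some composite of Verschiebungs and dual Steenrod operations. Granting it, the lemma follows: $f_\ast$ commutes with these operations and is injective on primitives in the two detecting degrees, hence injective on all primitives; a coalgebra map injective on primitives is injective; and injectivity plus the abstract isomorphism (equal finite dimensions in each degree) forces an isomorphism. This use of the full coalgebra-over-$\mathcal{A}$ structure --- primitives, Verschiebung, and dual Steenrod action together --- is the substance of the computation in \cite{CCPS}, and it is precisely the information your reduction to $Qf^\ast$ discards.
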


\begin{theorem}
There is a homotopy equivalence $\Omega^3S^{17}\{2\} \simeq W_4\times \Omega W_8$.
\end{theorem}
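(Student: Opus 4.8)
The plan is to build a map from $W_4 \times \Omega W_8$ into $\Omega^3S^{17}\{2\}$ and show it is a homotopy equivalence by verifying it is a mod-$2$ homology isomorphism between simply-connected spaces (then invoking Whitehead's theorem $2$-locally). The looped fibration $W_n \to \Omega^3S^{2np+1}\{p\} \to \Omega W_{np}$ obtained just after Lemma \ref{fibration}, specialized to $p=2$, $n=4$, gives a fibration
\[
W_4 \xrightarrow{\mathmakebox[0.5cm]{}} \Omega^3S^{17}\{2\} \xrightarrow{\mathmakebox[0.5cm]{}} \Omega W_8,
\]
so it suffices to split this fibration, i.e. to produce a section $\Omega W_8 \to \Omega^3S^{17}\{2\}$ (or equivalently a retraction onto the fibre $W_4$) and combine it with the fibre inclusion.

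**First I would** produce the retraction onto $W_4$. The natural source of such a map is Lemma \ref{S}: taking $n=4$ there gives a lift $S:\Omega^2S^9 \to \Omega^2S^{17}\{2\}$ of the looped James--Hopf invariant, factoring through $\nu:\Omega^2S^9 \to BW_4$, and the map $BW_4 \to \Omega^2S^{17}\{2\}$ has homotopy fibre $\Omega W_8$. Looping this once yields $\Omega BW_4 \simeq W_4 \to \Omega^3S^{17}\{2\}$ as the fibre inclusion of our fibration. The key input is that $W_4$ is a retract of $\Omega^3S^{17}\{2\}$ — indeed, this retraction is precisely the content recalled in the introduction from \cite{CS}, where $W_4$ is shown to be a retract of $\Omega^3S^{17}\{2\}$ via the decomposition $\Omega^2S^9\{2\} \simeq BW_4 \times W_4$. **Then I would** assemble the retraction $r:\Omega^3S^{17}\{2\} \to W_4$ together with the projection $\Omega^3S^{17}\{2\} \to \Omega W_8$ of the fibration into a single map
\[
\Omega^3S^{17}\{2\} \xrightarrow{\mathmakebox[0.8cm]{}} W_4 \times \Omega W_8,
\]
and check it is an equivalence.

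**The main obstacle will be** the homology computation verifying that this map is a mod-$2$ homology isomorphism; here Lemma \ref{CCPS} is the decisive tool, so I would aim to apply an analogue of it rather than computing the Serre spectral sequence of the fibration by hand. Concretely, I expect to compare $H_\ast(W_4 \times \Omega W_8)$ with $H_\ast(\Omega^3S^{17}\{2\})$ as coalgebras over the Steenrod algebra using the known homologies of $W_n$ and $\Omega W_n$ (from Gray's work and \cite{CCPS}), check that the fibration's Serre spectral sequence collapses (which the existence of the retraction $r$ forces, since it splits the edge homomorphism in the relevant degrees), and conclude that the Poincar\'e series match so that the assembled map induces an isomorphism. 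The delicate point — and where I would spend the most care — is ensuring the two distinguished low-degree primitive classes (in degrees analogous to $2n-2$ and $4n-3$ appearing in Lemma \ref{CCPS}) are hit correctly, so that the splitting is genuine and not merely an abstract isomorphism of graded vector spaces; establishing this compatibility between the retraction coming from \cite{CS} and the fibration coming from Lemma \ref{fibration} is the crux of the argument.
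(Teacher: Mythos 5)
Your overall strategy is the paper's: split the looped fibration $W_4 \to \Omega^3S^{17}\{2\} \to \Omega W_8$ from Lemma \ref{fibration} by producing a left homotopy inverse of the fibre inclusion. But there is a genuine gap at exactly the point you defer: you take the retraction of $W_4$ off $\Omega^3S^{17}\{2\}$ as given by \cite{CS} and hope it is compatible with the fibre inclusion of this particular fibration, and you give no argument for that compatibility. The splitting map used in \cite{CS} is a priori a different map $\sigma_4$ with no known relation to the fibration of Lemma \ref{fibration}; knowing abstractly that $W_4$ retracts off $\Omega^3S^{17}\{2\}$ does not show that the composite $W_4 \xrightarrow{\Omega\tau_4} \Omega^3S^{17}\{2\} \xrightarrow{r} W_4$ is an equivalence (nonvanishing on the bottom cell alone does not force this unless one also invokes atomicity of $W_4$, which you do not). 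The paper's resolution is to \emph{rebuild} the decomposition $\Omega^2S^9\{2\} \simeq BW_2\times W_4$ so that the retraction is compatible by construction: it replaces the maps of \cite{CS} by $\tau_2$ and by the composite $g\circ\Omega\tau_4$, where $\Omega\tau_4$ is the fibre inclusion itself and $g:\Omega^3S^{17}\{2\}\to\Omega^2S^9\{2\}$ is the map of \cite[Corollary 2.1]{CS} nonzero on $H_{13}$. The homological input that makes this work is that $\tau_n$ is a lift of $j$ and hence nonzero on $H_{4n-2}$ by naturality of the Bockstein; Lemma \ref{CCPS} then shows $\mu\circ(\tau_2\times(g\circ\Omega\tau_4))$ is an equivalence $\psi$, and $\pi_2\circ\psi^{-1}\circ g$ is tautologically a left inverse of $\Omega\tau_4$. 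This construction is the content of the proof, not a routine check.

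A secondary issue: your ``main obstacle'' is misplaced. Once a left homotopy inverse of the fibre inclusion is in hand, no homology computation of $\Omega^3S^{17}\{2\}$ is needed --- the long exact sequence of the fibration splits and the five lemma shows $(r,p):\Omega^3S^{17}\{2\}\to W_4\times\Omega W_8$ is a weak equivalence. Moreover, the ``analogue of Lemma \ref{CCPS}'' you propose to apply does not exist as stated: that lemma recognizes equivalences into the \emph{double} loop space $\Omega^2S^{2n+1}\{2\}$, and the paper uses it only there (to establish $\psi$), never for $\Omega^3S^{17}\{2\}$.
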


\begin{proof}
Let $\tau_n$ denote the map $BW_n\to \Omega^2S^{4n+1}\{2\}$ appearing in Lemma \ref{fibration}. By \eqref{diagram2}, $\tau_n$ is a lift of $j$, implying that $\tau_n$ is nonzero in $H_{4n-2}(\;)$ by naturality of the Bockstein since $j$ is nonzero in $H_{4n-1}(\;)$. We can therefore use the maps $\tau_n$ in place of the (potentially different) maps $\sigma_n$ used in \cite{CS} to obtain product decompositions of $\Omega^2S^{4n+1}\{2\}$ for $n=1$ and $2$, the advantage being that $\tau_n$ has fibre $\Omega W_{2n}$. Explicitly, for $n=2$ this is done as follows. By \cite[Corollary 2.1]{CS}, there exists a map $g:\Omega^3S^{17}\{2\} \to \Omega^2S^9\{2\}$ which is nonzero in $H_{13}(\;)$. Letting $\mu$ denote the loop multiplication on $\Omega^2S^9\{2\}$, it follows that the composite
\[
\psi:BW_2\times W_4 \xrightarrow{\mathmakebox[1.5cm]{\tau_2\times (g\circ \Omega \tau_4)}} \Omega^2S^9\{2\}\times\Omega^2S^9\{2\} \xrightarrow{\mathmakebox[0.5cm]{\mu}} \Omega^2S^9\{2\}
\]
induces an isomorphism on the module of primitives in degrees $6$ and $13$. Since $H_\ast(BW_2\times W_4)$ and $H_\ast(\Omega^2S^9\{2\})$ are isomorphic as coalgebras over the Steenrod algebra, the map above is a homology isomorphism by Lemma \ref{CCPS} and hence a homotopy equivalence. 

Now the map $\Omega \tau_4$ fits in the homotopy fibration
\[ W_4 \xrightarrow{\mathmakebox[0.5cm]{\Omega \tau_4}} \Omega^3S^{17}\{2\} \xrightarrow{\mathmakebox[0.5cm]{}} \Omega W_8 \]
and has a left homotopy inverse given by $\pi_2 \circ \psi^{-1}\circ g$ where $\psi^{-1}$ is a homotopy inverse of $\psi$ and $\pi_2:BW_2\times W_4 \to W_4$ is the projection onto the second factor. (Alternatively, composing $g:\Omega^3S^{17}\{2\} \to \Omega^2S^{9}\{2\}$ with the map $\Omega^2S^{9}\{2\} \to W_4$ of Lemma \ref{fibration} yields a left homotopy inverse of $\Omega \tau_4$.) It follows that the homotopy fibration above is fibre homotopy equivalent to the trivial fibration $W_4 \times \Omega W_8 \to \Omega W_8$. 
\end{proof}

\begin{corollary}
$\pi_k(S^{17}; \mathbb{Z}/2\mathbb{Z}) \cong \pi_{k-4}(W_4) \oplus \pi_{k-3}(W_8)$ for all $k\ge 4$.
\end{corollary}

One consequence of the splitting of the fibration $W_n\to \Omega^3S^{4n+1}\{p\} \to \Omega W_{2n}$ when $n\in \{1,2,4\}$ is a corresponding homotopy decomposition of the fibre of the map $S$ appearing in Lemma \ref{S}. As in \cite{T 2-primary}, we define the space $Y$ and the map $t$ by the homotopy fibration
\[ Y\xrightarrow{\mathmakebox[0.5cm]{t}} \Omega^2S^{2n+1}\xrightarrow{\mathmakebox[0.5cm]{S}} \Omega^2S^{4n+1}\{2\}. \]
This space and its odd primary analogue play a central role in the construction of Anick's fibration in \cite{T 2-primary}, \cite{T Anick's} and the alternative proof given in \cite{T A new proof} of Cohen, Moore and Neisendorfer's determination of the odd primary homotopy exponent of spheres. Unlike at odd primes, the lift $S$ of $\Omega H$ cannot be chosen to be an $H$-map. Nevertheless, the corollary below shows that its fibre has the structure of an $H$-space in cases of Hopf invariant one.

\begin{corollary}\label{Y}
There is a homotopy fibration $S^{2n-1} \xrightarrow{f} Y \xrightarrow{g} \Omega W_{2n}$ with the property that the composite $S^{2n-1} \xrightarrow{f} Y \xrightarrow{t} \Omega^2S^{2n+1}$ is homotopic to the double suspension $E^2$. Moreover, if $n=1,2$ or $4$ then the fibration splits, giving a homotopy equivalence
\[ Y \simeq S^{2n-1} \times \Omega W_{2n}. \]
\end{corollary}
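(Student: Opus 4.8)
The plan is to deduce both assertions from the factorization $S\simeq \tau_n\circ\nu$ supplied by Lemma~\ref{S}, where $\tau_n\colon BW_n\to\Omega^2S^{4n+1}\{2\}$ denotes the map of Lemma~\ref{fibration} with homotopy fibre $\Omega W_{2n}$. Since $Y$ is by definition the homotopy fibre of the composite $\Omega^2S^{2n+1}\xrightarrow{\nu}BW_n\xrightarrow{\tau_n}\Omega^2S^{4n+1}\{2\}$, I would invoke the standard homotopy fibration relating the homotopy fibres of a composite, namely $\mathrm{hofib}(\nu)\xrightarrow{f}\mathrm{hofib}(\tau_n\nu)\xrightarrow{g}\mathrm{hofib}(\tau_n)$. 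Gray's fibration identifies $\mathrm{hofib}(\nu)\simeq S^{2n-1}$ with fibre inclusion $E^2$, the definition of $Y$ gives $\mathrm{hofib}(\tau_n\nu)=Y$ with fibre inclusion $t$, and Lemma~\ref{S} gives $\mathrm{hofib}(\tau_n)\simeq\Omega W_{2n}$; this produces the asserted fibration $S^{2n-1}\xrightarrow{f}Y\xrightarrow{g}\Omega W_{2n}$. The relation $t\circ f\simeq E^2$ should then be immediate, because the comparison map $f$ on fibres is by construction compatible with the fibre inclusions into $\Omega^2S^{2n+1}$.

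The same construction also exhibits $g$ as the homotopy pullback of $\nu$ along the connecting map $\partial\colon\Omega W_{2n}\to BW_n$ of the fibration of Lemma~\ref{fibration}. Concretely, I would record the homotopy pullback square
\[
\xymatrix{
Y \ar[r]^-{t} \ar[d]_-{g} & \Omega^2S^{2n+1} \ar[d]^-{\nu} \\
\Omega W_{2n} \ar[r]^-{\partial} & BW_n,
}
\]
which one checks directly from the two descriptions of $Y$ as a homotopy fibre (so that $g$ is the pullback of $\nu$ along $\partial$). This pullback description is what drives the splitting, since a null-homotopy of $\partial$ immediately trivializes the pulled-back fibration.

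To prove the splitting for $n\in\{1,2,4\}$, the key input is the splitting of the looped fibration $W_n\to\Omega^3S^{4n+1}\{2\}\xrightarrow{\Omega\pi}\Omega W_{2n}$, which holds in exactly these cases, by Theorem~\ref{mainthm} for $n=4$ and by the decompositions of $\Omega^2S^5\{2\}$ and $\Omega^2S^9\{2\}$ for $n=1,2$ (see \cite{C 2-primary}, \cite{CS}). From such a splitting I would extract a section $s$ of the projection $\Omega\pi$. Observing that the fibre sequence of Lemma~\ref{fibration} continues as $\Omega^3S^{4n+1}\{2\}\xrightarrow{\Omega\pi}\Omega W_{2n}\xrightarrow{\partial}BW_n$, so that $\partial\circ\Omega\pi\simeq\ast$, I would then conclude $\partial\simeq\partial\circ\Omega\pi\circ s\simeq\ast$. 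Feeding this null-homotopy into the pullback square above trivializes the fibration $g$, yielding a homotopy equivalence $Y\simeq S^{2n-1}\times\Omega W_{2n}$ under which $g$ corresponds to the projection.

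The argument is essentially formal once the pullback square is in place; the only genuine content is imported through the splitting of the looped fibration, which for $n=4$ is Theorem~\ref{mainthm} and hence the real work of the paper. Accordingly, I expect the main obstacle to be purely a matter of bookkeeping: verifying that the connecting map appearing as the bottom edge of the pullback square is literally the map $\partial$ carrying the null-homotopy derived from $s$, i.e.\ that the two occurrences of $\Omega W_{2n}$ and of $\Omega\pi$ are correctly matched across the fibre-of-a-composite construction and the looping of the fibration of Lemma~\ref{fibration}. Once these identifications are pinned down, no further computation is required.
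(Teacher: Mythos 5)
Your proposal is correct and follows essentially the same route as the paper: both realize $Y$ as the homotopy fibre of the composite $\tau_n\circ\nu$ from Lemma~\ref{S}, read off the fibration $S^{2n-1}\to Y\to \Omega W_{2n}$ and the pullback square over $BW_n$, and use the splitting of the looped fibration of Lemma~\ref{fibration} (Theorem~\ref{mainthm} and its analogues for $n=1,2$) to conclude that the connecting map $\Omega W_{2n}\to BW_n$ is nullhomotopic. The only cosmetic difference is the last step: you trivialize the pulled-back fibration directly, whereas the paper uses the resulting lift of $t$ through $E^2$ to produce a left homotopy inverse of $f$.
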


\begin{proof}
By Lemma \ref{S}, the homotopy fibration defining $Y$ fits in a homotopy pullback diagram
\[
\xymatrix{
S^{2n-1} \ar[d]^f \ar@{=}[r] & S^{2n-1} \ar[d]^{E^2} \\
Y \ar[d]^g \ar[r]^-t & \Omega^2S^{2n+1} \ar[d]^\nu \ar[r]^-S & \Omega^2S^{4n+1}\{2\} \ar@{=}[d] \\
\Omega W_{2n} \ar[r] & BW_n \ar[r] & \Omega^2S^{4n+1}\{2\},
}
\]
which proves the first statement. Note that when $n=1,2$ or $4$, the map $\Omega W_{2n}\to BW_n$ is nullhomotopic by Theorem \ref{mainthm}, hence $t$ lifts through the double suspension. Since any choice of a lift $Y\to S^{2n-1}$ is degree one in $H_{2n-1}(\;)$, it also serves as a left homotopy inverse of $f$, which implies the asserted splitting.
\end{proof}

\begin{remark}
The first part of Corollary \ref{Y} and an odd primary version are proved by different means in \cite{T 2-primary} and \cite{T A new proof}, respectively (see Remark 6.2 of \cite{T 2-primary}). At odd primes, there is an analogous splitting for $n=1$:
$$Y \simeq S^1 \times \Omega W_p \simeq S^1 \times \Omega^3 T^{2p^2+1}(p)$$
where $T^{2p^2+1}(p)$ is Anick's space (see \cite{T A case}).
\end{remark}

\section{Relations to Whitehead products in Moore spaces and Stiefel manifolds}

The special homotopy decompositions of $\Omega^3S^{2n+1}\{2\}$ discussed in the previous section are made possible by the existence of special elements in the stable homotopy groups of spheres, namely elements of Arf--Kervaire invariant one $\theta \in \pi_{2n-2}^S$ such that $\theta\eta$ is divisible by $2$. In this section, we give several reformulations of the existence of such elements in terms of mod-$2$ Moore spaces and Stiefel manifolds.

Let $i_{n-1}:S^{n-1} \to P^n(2)$ be the inclusion of the bottom cell and let $j_n:P^n(2) \to P^n(2)$ be the identity map. Similarly, let $i'_{2n-1}:S^{2n-1} \to V_{2n+1,2}$ and $j'_{2n}:P^{2n}(2) \to V_{2n+1,2}$ denote the inclusions of the bottom cell and bottom Moore space, respectively.\footnote{Note that we index these maps by the dimension of their source rather than their target, so the element of $\pi_{4n-1}(P^{2n+1}(2))$ we call $[i_{2n}, i_{2n}]$ is called $[i_{2n+1}, i_{2n+1}]$ in \cite{MSk}.}

\begin{proposition} \label{Whitehead products}
The Whitehead product $[i'_{2n-1}, j'_{2n}] \in \pi_{4n-2}(V_{2n+1,2}; \mathbb{Z}/2\mathbb{Z})$ is trivial if and only if the Whitehead square $[i_{2n},i_{2n}] \in \pi_{4n-1}(P^{2n+1}(2))$ is divisible by $2$.
\end{proposition}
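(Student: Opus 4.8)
The plan is to prove the two conditions equivalent by realizing each as a statement about the attaching map of the top cell of the Stiefel manifold $V_{2n+1,2}$, exploiting its description as the unit tangent bundle of $S^{2n}$. First I would record the cell structure: the fibration $S^{2n-1}\xrightarrow{i'_{2n-1}} V_{2n+1,2}\xrightarrow{p} S^{2n}$ has Euler number $2$ (as $2n$ is even), so the $2n$-skeleton of $V_{2n+1,2}$ is the Moore space $P^{2n}(2)=S^{2n-1}\cup_2 e^{2n}$ included by $j'_{2n}$, and there is a single remaining cell giving $V_{2n+1,2}\simeq P^{2n}(2)\cup_w e^{4n-1}$ for an attaching map $w\colon S^{4n-2}\to P^{2n}(2)$. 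Since $i'_{2n-1}$ factors as $S^{2n-1}\xrightarrow{b} P^{2n}(2)\xrightarrow{j'_{2n}} V_{2n+1,2}$, where $b$ is the bottom-cell inclusion, naturality of the generalized Whitehead product gives $[i'_{2n-1},j'_{2n}]=(j'_{2n})_\ast[b,\mathrm{id}_{P^{2n}(2)}]$, reducing the left-hand condition to the vanishing of the image under $j'_{2n}$ of an internal mod-$2$ Whitehead product in the Moore space.

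For the right-hand condition I would pass to suspensions. Because $P^{2n+1}(2)=\Sigma P^{2n}(2)$ and $i_{2n}=\Sigma b$, suspending the cell structure above yields $\Sigma V_{2n+1,2}\simeq P^{2n+1}(2)\cup_{\Sigma w} e^{4n}$, so $\Sigma w\in\pi_{4n-1}(P^{2n+1}(2))$ lies in the same group as $[i_{2n},i_{2n}]$. The classical identification of the suspended top-cell attaching map of a Stiefel manifold with the Whitehead square of the bottom cell of the suspended lower skeleton gives $\Sigma w=\pm[i_{2n},i_{2n}]$ (up to a lower-filtration correction I would have to control). This is the crucial bridge: it converts the divisibility of $[i_{2n},i_{2n}]$ into a divisibility statement about $\Sigma w$, i.e.\ about the attaching map $w$ after one suspension.

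I would then translate both statements into a single lifting problem along the bottom cell. Applying the pinch $\rho_P\colon P^{2n}(2)\to S^{2n}$ annihilates $[b,\mathrm{id}_{P^{2n}(2)}]$, since $\rho_P\circ b\simeq\ast$ forces $(\rho_P)_\ast[b,\mathrm{id}_{P^{2n}(2)}]=[\ast,\rho_P]=0$; I would argue that the internal product therefore comes from the bottom cell, $[b,\mathrm{id}_{P^{2n}(2)}]=b_\ast\kappa$ for a mod-$2$ class $\kappa\colon P^{4n-2}(2)\to S^{2n-1}$, whence $[i'_{2n-1},j'_{2n}]=(i'_{2n-1})_\ast\kappa$. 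Its vanishing is then equivalent to $\kappa\in\ker\big((i'_{2n-1})_\ast\big)$, which by the homotopy exact sequence of the tangent bundle (with $\mathbb{Z}/2$ coefficients) is the image of the boundary map $\partial\colon\pi_{4n-1}(S^{2n};\mathbb{Z}/2)\to\pi_{4n-2}(S^{2n-1};\mathbb{Z}/2)$, a Whitehead product with the characteristic class. On the Moore-space side, the factorization $2\cdot\mathrm{id}_{P^{2n+1}(2)}=i\circ\bar\eta\circ\rho$ of the degree-$2$ self-map exhibits $2$-divisibility of $\Sigma w=[i_{2n},i_{2n}]$ as governed by the same boundary data through the EHP/James–Hopf sequence. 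Matching the two descriptions of this boundary map yields the asserted equivalence.

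The main obstacle is exactly this last matching. The two invariants live in different flavors of homotopy group and differ by one dimension: $[i'_{2n-1},j'_{2n}]$ is an \emph{order-two} (mod-$2$) phenomenon in $\pi_{4n-2}(V_{2n+1,2};\mathbb{Z}/2)$, whereas $[i_{2n},i_{2n}]$ divisible by $2$ is a \emph{divisibility} phenomenon in the integral group $\pi_{4n-1}(P^{2n+1}(2))$. Reconciling them requires controlling the desuspension of $w$ and the indeterminacy in $\Sigma w=\pm[i_{2n},i_{2n}]$ right at the edge of the Blakers–Massey range (total dimension $4n-1$ against connectivity $2n-1$), where the relevant exact sequences only just apply; the factor of $\eta$ in the degree-$2$ self-map of $P^{2n+1}(2)$ is precisely what converts the order-two statement into the divisibility statement, and verifying that no further correction terms intervene is the heart of the argument.
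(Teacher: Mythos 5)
Your opening moves agree with the paper's: the identification $\mathrm{sk}_{2n}(V_{2n+1,2})\simeq P^{2n}(2)$, the naturality reduction $[i'_{2n-1},j'_{2n}]=(j'_{2n})_\ast[i_{2n-1},j_{2n}]$, and the use of Mukai's identification of the suspended attaching map $\Sigma\lambda$ with $[i_{2n},i_{2n}]$ are exactly the right ingredients. But the reduction in your third paragraph is not merely unjustified --- it is false. You claim that because the internal Whitehead product $[b,\mathrm{id}_{P^{2n}(2)}]=[i_{2n-1},j_{2n}]$ dies under the pinch map $P^{2n}(2)\to S^{2n}$, it must factor through the bottom cell as $b_\ast\kappa$. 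This cannot happen: the adjoint of $[i_{2n-1},j_{2n}]$ is a Samelson product whose Hurewicz image is $[u,v]=u\otimes v+v\otimes u\neq 0$ in $H_{4n-3}(\Omega P^{2n}(2))\cong T(u,v)$ with $|u|=2n-2$, $|v|=2n-1$, whereas anything of the form $b\circ\kappa$ has adjoint factoring through $\Omega S^{2n-1}$, and $H_{4n-3}(\Omega S^{2n-1})\cong T(u)_{4n-3}=0$ since $4n-3$ is not a multiple of $2n-2$. The fallacy is that killing the pinch map only lets you lift to the homotopy fibre $F$ of $P^{2n}(2)\to S^{2n}$, and the inclusion $S^{2n-1}\to F$ fails to be highly enough connected to compress a map from the $(4n-2)$-dimensional complex $P^{4n-2}(2)$ --- you are exactly at the edge of the Blakers--Massey range, as you yourself observe. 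Everything downstream ($\kappa$, the boundary map of the tangent bundle applied to $\kappa$) therefore rests on an object that does not exist.

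The reduction that works --- and is what the paper does --- uses the homotopy fibre of $j'_{2n}\colon P^{2n}(2)\to V_{2n+1,2}$ rather than of the pinch map: that fibre has $(4n-2)$-skeleton a single sphere $S^{4n-2}$ mapping into $P^{2n}(2)$ by the attaching map $\lambda$, so $[i'_{2n-1},j'_{2n}]=0$ if and only if $[i_{2n-1},j_{2n}]=\lambda\circ q$ with $q\colon P^{4n-2}(2)\to S^{4n-2}$ the pinch map (the alternative $[i_{2n-1},j_{2n}]=0$ being excluded by the Hurewicz computation above). The remaining step, which you explicitly defer as ``the heart of the argument,'' is to relate $\lambda\circ q=[i_{2n-1},j_{2n}]$ to the $2$-divisibility of $[i_{2n},i_{2n}]$; the paper carries this out by a concrete computation in the morphism of EHP sequences induced by $q$, using Mukai's facts that $\pi_{4n}(\Sigma P^{2n}\wedge P^{2n})\cong\mathbb{Z}/4\mathbb{Z}\{k_2\}$ with $P(k_2)=\pm2\lambda$, which force $\ker E\cap[P^{4n-2}(2),P^{2n}(2)]=\{0,P(k_1)\}=\{0,[i_{2n-1},j_{2n}]\}$, together with the observation that $E(\lambda\circ q)=[i_{2n},i_{2n}]\circ q$ vanishes precisely when $[i_{2n},i_{2n}]$ is divisible by $2$. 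Your sketch supplies no substitute for this computation, so both the central reduction and the concluding comparison are missing.
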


\begin{proof}
Let $\lambda:S^{4n-2}\to P^{2n}(2)$ denote the attaching map of the top cell in $V_{2n+1,2} \simeq P^{2n}(2) \cup_\lambda e^{4n-1}$ and note that $[i'_{2n-1}, j'_{2n}] = j'_{2n}\circ [i_{2n-1}, j_{2n}]$ by naturality of the Whitehead product. The map $[i_{2n-1}, j_{2n}]: P^{4n-2}(2) \to P^{2n}(2)$ is essential since its adjoint is a Samelson product with nontrivial Hurewicz image $[u,v] \in H_{4n-3}(\Omega P^{2n}(2))$, where $H_\ast(\Omega P^{2n}(2))$ is isomorphic as an algebra to the tensor algebra $T(u,v)$ with $|u|=2n-2$ and $|v|=2n-1$ by the Bott--Samelson theorem. Since the homotopy fibre of the inclusion $j'_{2n}:P^{2n}(2)\to V_{2n+1,2}$ has $(4n-2)$-skeleton $S^{4n-2}$ which maps into $P^{2n}(2)$ by the attaching map $\lambda$, it follows that $[i'_{2n-1}, j'_{2n}]$ is trivial if and only if $[i_{2n-1}, j_{2n}]$ is homotopic to the composite
\[ P^{4n-2}(2) \xrightarrow{\mathmakebox[0.5cm]{q}} S^{4n-2} \xrightarrow{\mathmakebox[0.5cm]{\lambda}} P^{2n}(2) \]
where $q$ is the pinch map.

To ease notation let $P^n$ denote the mod-$2$ Moore space $P^n(2)$ and consider the morphism of $EHP$ sequences
\[
\xymatrix{
[S^{4n}, P^{2n+1}]\ar[r]^-H \ar[d]^{q^\ast} & [S^{4n}, \Sigma P^{2n}\wedge P^{2n}]\ar[r]^-P\ar[d]^{q^\ast} & [S^{4n-2}, P^{2n}]\ar[r]^-E\ar[d]^{q^\ast} & [S^{4n-1}, P^{2n+1}] \ar[d]^{q^\ast} \\
[P^{4n}, P^{2n+1}]\ar[r]^-H & [P^{4n}, \Sigma P^{2n}\wedge P^{2n}] \ar[r]^-P & [P^{4n-2}, P^{2n}] \ar[r]^-E & [P^{4n-1}, P^{2n+1}] 
}
\]
induced by the pinch map. A homology calculation shows that the $(4n)$-skeleton of $\Sigma P^{2n}\wedge P^{2n}$ is homotopy equivalent to $P^{4n} \vee S^{4n}$. Let $k_1:P^{4n} \to \Sigma P^{2n}\wedge P^{2n}$ and $k_2:S^{4n} \to \Sigma P^{2n}\wedge P^{2n}$ be the composites
\[ P^{4n} \hookrightarrow P^{4n} \vee S^{4n} \simeq \mathrm{sk}_{4n}(\Sigma P^{2n}\wedge P^{2n}) \hookrightarrow \Sigma P^{2n}\wedge P^{2n} \]
and
\[ S^{4n} \hookrightarrow P^{4n} \vee S^{4n} \simeq \mathrm{sk}_{4n}(\Sigma P^{2n}\wedge P^{2n}) \hookrightarrow \Sigma P^{2n}\wedge P^{2n} \]
defined by the left and right wedge summand inclusions, respectively. Then we have that $\pi_{4n}(\Sigma P^{2n}\wedge P^{2n}) = \mathbb{Z}/4\mathbb{Z}\{k_2\}$ and $P(k_2)=\pm 2\lambda$ by \cite[Lemma 12]{M attaching}. 
It follows from the universal coefficient exact sequence
\[ 0 \to \pi_{4n}(\Sigma P^{2n}\wedge P^{2n}) \otimes \mathbb{Z}/2\mathbb{Z} \to \pi_{4n}(\Sigma P^{2n}\wedge P^{2n};\mathbb{Z}/2\mathbb{Z}) \to \mathrm{Tor}(\pi_{4n-1}(\Sigma P^{2n}\wedge P^{2n}), \mathbb{Z}/2\mathbb{Z}) \to 0 \]
that
\begin{align*}
\pi_{4n}(\Sigma P^{2n}\wedge P^{2n}; \mathbb{Z}/2\mathbb{Z}) &= [P^{4n}, \Sigma P^{2n}\wedge P^{2n}] \\
&= \mathbb{Z}/2\mathbb{Z}\{k_1\} \oplus \mathbb{Z}/2\mathbb{Z}\{k_2 \circ q\}
\end{align*}
and that the generator $k_2 \circ q$ is in the kernel of $P$ since $P(k_2)=\pm 2\lambda$ implies $$P(k_2 \circ q)=P(q^\ast (k_2))=q^\ast(P(k_2))= \pm \lambda \circ 2 \circ q=0$$ by the commutativity of the above diagram and the fact that $q:P^{4n-2}\to S^{4n-2}$ and $2:S^{4n-2}\to S^{4n-2}$ are consecutive maps in a cofibration sequence. Therefore $[i_{2n-1}, j_{2n}] = P(k_1)$ since the suspension of a Whitehead product is trivial. On the other hand, $\Sigma \lambda$ is homotopic to the composite $S^{4n-1} \xrightarrow{[\iota_{2n}, \iota_{2n}]} S^{2n} \xrightarrow{i_{2n}} P^{2n+1}$ by \cite{M attaching}, which implies $E(\lambda \circ q) = i_{2n} \circ [\iota_{2n}, \iota_{2n}] \circ q = [i_{2n}, i_{2n}] \circ q$ is trivial in $[P^{4n-1}, P^{2n+1}]$ precisely when $[i_{2n}, i_{2n}]$ is divisible by $2$. Hence $[i_{2n}, i_{2n}]$ is divisible by $2$ if and only if $\lambda \circ q = P(k_1) = [i_{2n-1}, j_{2n}] \in [P^{4n-2}, P^{2n}]$, and the proposition follows.
\end{proof}

We use Proposition \ref{Whitehead products} in two ways. First, since the calculation of $\pi_{31}(P^{17}(2))$ in \cite{MSh} shows that $[i_{16}, i_{16}] = 2\widetilde{\sigma}_{16}^2$ for a suitable choice of representative $\widetilde{\sigma}_{16}^2$ of the Toda bracket $\{\sigma_{16}^2, 2\iota_{16}, i_{16}\}$, it follows that the Whitehead product $[i'_{15}, j'_{16}]: P^{30}(2) \to V_{17,2}$ is nullhomotopic and hence there exists a map $S^{15} \times P^{16}(2) \to V_{17,2}$ extending the wedge of skeletal inclusions $S^{15} \vee P^{16}(2) \to V_{17,2}$. This resolves the only case left unsettled by Theorem 3.2 of \cite{S Indecomposability}. 

In the other direction, note that such maps $S^{2n-1}\times P^{2n}(2) \to V_{2n+1,2}$ restrict to maps $S^{2n-1}\times S^{2n-1}\to V_{2n+1,2}$ which exist only in cases of Kervaire invariant one by \cite[Proposition 2.27]{W}, so Proposition \ref{Whitehead products} shows that when $2n \ne 2^k$ for some $k \ge1$ the Whitehead square $[i_{2n}, i_{2n}]$ cannot be divisible by $2$ for the same reasons that the Whitehead square $[\iota_{2n-1}, \iota_{2n-1}] \in \pi_{4n-3}(S^{2n-1})$ cannot be divisible by $2$. Moreover, since maps $S^{2n-1}\times P^{2n}(2) \to V_{2n+1,2}$ extending the inclusions of $S^{2n-1}$ and $P^{2n}(2)$ are shown not to exist for $2n>16$ in \cite{S Indecomposability}, Proposition \ref{Whitehead products} implies that the Whitehead square $[i_{2n}, i_{2n}]$ is divisible by $2$ if and only if $2n=2,4,8$ or $16$. In all other cases it generates a $\mathbb{Z}/2\mathbb{Z}$ summand in $\pi_{4n-1}(P^{2n+1}(2))$. This improves on the main theorem of \cite{MSk} which shows by other means that $[i_{2n}, i_{2n}]$ is not divisible by $2$ when $2n$ is not a power of $2$.

These results are summarized in Theorem \ref{TFAE} below. First we recall the following well-known equivalent formulations of the Kervaire invariant problem.

\begin{theorem} [\cite{C Course}, \cite{W}]
The following are equivalent:
\begin{enumerate}[label=(\alph*),ref=(\alph*)]
\item The Whitehead square $[\iota_{2n-1}, \iota_{2n-1}] \in \pi_{4n-3}(S^{2n-1})$ is divisible by $2$;
\item There is a map $P^{4n-2}(2) \to \Omega S^{2n}$ which is nonzero in homology;
\item There exists a space $X$ with mod-$2$ cohomology $\widetilde{H}^i(X)\cong \mathbb{Z}/2\mathbb{Z}$ for $i=2n$, $4n-1$, $4n$ and zero otherwise with $Sq^{2n}:H^{2n}(X) \to H^{4n}(X)$ and $Sq^1:H^{4n-1}(X) \to H^{4n}(X)$ isomorphisms;
\item There exists a map $f:S^{2n-1}\times S^{2n-1} \to V_{2n+1,2}$ such that $f|_{S^{2n-1}\times\ast}=f|_{\ast\times S^{2n-1}}$ is the inclusion of the bottom cell;
\item $n =1$ or there exists an element $\theta \in \pi_{2n-2}^S$ of Kervaire invariant one.
\end{enumerate}
\end{theorem}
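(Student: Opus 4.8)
The plan is to prove the five conditions equivalent by a short cycle of implications, organized so that the four homotopy-theoretic statements (a)--(d) fall out formally from the $EHP$ sequence and the cell structure of $V_{2n+1,2}$, leaving the passage to the Kervaire invariant (e) as the one step carrying genuine geometric content. Throughout I would work $2$-locally and write $P^k$ for $P^k(2)$.

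First I would dispose of (a) $\Leftrightarrow$ (d), which is the integral, sphere-level analogue of Proposition \ref{Whitehead products}. The obstruction to extending $i'_{2n-1}\vee i'_{2n-1}:S^{2n-1}\vee S^{2n-1}\to V_{2n+1,2}$ over the top cell of the product $S^{2n-1}\times S^{2n-1}$ is the Whitehead product $[i'_{2n-1},i'_{2n-1}]$, and by naturality $[i'_{2n-1},i'_{2n-1}]=i'_\ast[\iota_{2n-1},\iota_{2n-1}]$. Using the fibration $S^{2n-1}\xrightarrow{i'}V_{2n+1,2}\xrightarrow{p}S^{2n}$ together with the classical computation of the connecting homomorphism $\partial:\pi_{4n-2}(S^{2n})\to\pi_{4n-3}(S^{2n-1})$ of the even-sphere tangent bundle, the class $i'_\ast[\iota_{2n-1},\iota_{2n-1}]$ vanishes precisely when $[\iota_{2n-1},\iota_{2n-1}]$ lies in the image of $\partial$, and this occurs exactly when $[\iota_{2n-1},\iota_{2n-1}]$ is divisible by $2$ (compare \cite{S Indecomposability}, \cite{W}).

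Next I would establish (b) $\Leftrightarrow$ (c) by passing to adjoints. A map $P^{4n-2}\to\Omega S^{2n}$ corresponds to a map $\tilde\beta:P^{4n-1}\to S^{2n}$, and its mapping cone $X=S^{2n}\cup_{\tilde\beta}CP^{4n-1}$ has exactly the cells in dimensions $2n$, $4n-1$, $4n$ required by (c): the operation $Sq^1$ joining the top two cells is the suspended Bockstein of the Moore space, while the hypothesis that $P^{4n-2}\to\Omega S^{2n}$ be nonzero in $H_{4n-2}(\;;\mathbb{Z}/2\mathbb{Z})$---where the target class is the one detected by the James--Hopf invariant $\Omega S^{2n}\to\Omega S^{4n-1}$---translates precisely into $Sq^{2n}:H^{2n}(X)\to H^{4n}(X)$ being nonzero. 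Conversely every $X$ as in (c) is realized as such a cone, so (b) and (c) are interchangeable. I would also record here the easy half of the link to (a): writing $[\iota_{2n-1},\iota_{2n-1}]=2\beta$ and suspending annihilates the Whitehead square, so $2\,\Sigma\beta=0$ and $\Sigma\beta:S^{4n-2}\to S^{2n}$ extends over the degree-$2$ map to a map $\tilde\beta:P^{4n-1}\to S^{2n}$ of the above form, producing a complex $X$ with the correct cells and nontrivial $Sq^1$.

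The remaining and essential point is that the $Sq^{2n}$ appearing in (c) is nontrivial if and only if the Whitehead square is divisible by $2$, and that this is equivalent to (e). From a space $X$ as in (c), the Pontryagin--Thom construction applied to the stable attaching data of its top cell relative to its bottom cell yields a framed manifold of dimension $2n-2$, and Browder's theorem (see \cite{C Course}) identifies the nontriviality of the functional operation $Sq^{2n}$ recorded in (c) with this manifold having Kervaire invariant one, giving (c) $\Leftrightarrow$ (e) ($n=1$ being the degenerate case). To close the cycle with (a), I would run the extension $\Sigma\beta\mapsto\tilde\beta$ of the previous paragraph through the $EHP$ sequence with $\mathbb{Z}/2\mathbb{Z}$-coefficient test objects and verify that the resulting $Sq^{2n}$ is nonzero exactly when $[\iota_{2n-1},\iota_{2n-1}]$ is divisible by $2$. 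I expect this final step---showing that divisibility of the Whitehead square forces the Kervaire invariant condition, and is not merely enough to build a complex with the right cells---to be the main obstacle, since it is precisely here that the geometry of the Kervaire invariant, rather than the formal $EHP$ bookkeeping, must be brought in.
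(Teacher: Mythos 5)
The paper does not actually prove this theorem: it is stated as a known result and attributed to \cite{C Course} and \cite{W}, so there is no in-paper argument to compare against. Judged on its own terms, your outline follows the standard route from those sources, and most of the individual links are sound: (a)$\Leftrightarrow$(d) via the exact sequence of $S^{2n-1}\to V_{2n+1,2}\to S^{2n}$ (the point to make explicit is that $E\colon\pi_{4n-3}(S^{2n-1})\to\pi_{4n-2}(S^{2n})$ is onto $2$-locally because $\pi_{4n-2}(S^{4n-1})=0$, so the image of the connecting map is $\partial(\iota_{2n})\circ\pi_{4n-3}(S^{2n-1})=2\pi_{4n-3}(S^{2n-1})$); (b)$\Leftrightarrow$(c) by adjointness; and (c)$\Leftrightarrow$(e) by Browder's theorem, which is the one step with genuine geometric content and is reasonably cited rather than reproved.

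The gap is that your cycle of implications never closes, and you have mislocated the difficulty. You produce from (a) a complex with the right cells and the right $Sq^1$, but defer the verification that $Sq^{2n}$ acts nontrivially, predicting that this step "must bring in the geometry of the Kervaire invariant." It does not: it is formal. If $[\iota_{2n-1},\iota_{2n-1}]=2\beta$, the relation $\beta\circ 2\iota_{4n-3}=[\iota_{2n-1},\iota_{2n-1}]$ induces a map of cofibres $P^{4n-2}(2)\to J_2(S^{2n-1})\subset\Omega S^{2n}$ that is degree one on the top cell, hence carries the fundamental class onto $u_{2n-1}^2\in H_{4n-2}(\Omega S^{2n})$; since the cup square in the cofibre of the adjoint $P^{4n-1}(2)\to S^{2n}$ is computed by the second James--Hopf invariant, the resulting $X$ automatically has $Sq^{2n}$ an isomorphism. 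Running the same comparison of attaching maps backwards gives (b)$\Rightarrow$(a): a map $P^{4n-2}(2)\to\Omega S^{2n}$ nonzero in $H_{4n-2}$ compresses into $J_2(S^{2n-1})$, and comparing the attaching maps $2\iota_{4n-3}$ and $[\iota_{2n-1},\iota_{2n-1}]$ of the two top cells yields $[\iota_{2n-1},\iota_{2n-1}]=\pm 2\beta$ with $\beta$ the restriction to $S^{4n-3}$. With these two formal steps supplied, the chain (a)$\Leftrightarrow$(b)$\Leftrightarrow$(c)$\Leftrightarrow$(e) together with (a)$\Leftrightarrow$(d) is complete, and all of the Kervaire-invariant geometry is concentrated in the single appeal to Browder for (c)$\Leftrightarrow$(e).
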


The above conditions hold for $2n=2, 4, 8, 16, 32$ and $64$, and the recent solution to the Kervaire invariant problem by Hill, Hopkins and Ravenel \cite{HHR} implies that, with the possible exception of $2n=128$, these are the only values for which the conditions hold. Mimicking the reformulations above we obtain the following.

\begin{theorem} \label{TFAE}
The following are equivalent:
\begin{enumerate}[label=(\alph*),ref=(\alph*)]
\item The Whitehead square $[i_{2n}, i_{2n}] \in \pi_{4n-1}(P^{2n+1}(2))$ is divisible by $2$; \label{statement1}
\item There is a map $P^{4n}(2) \to \Omega P^{2n+2}(2)$ which is nonzero in homology; \label{statement2}
\item There exists a space $X$ with mod-$2$ cohomology $\widetilde{H}^i(X)\cong\mathbb{Z}/2\mathbb{Z}$ for $i=2n+1$, $2n+2$, $4n+1$, $4n+2$ and zero otherwise with $Sq^{2n}: H^{2n+1}(X)\to H^{4n+1}(X)$, $Sq^1:H^{2n+1}(X)\to H^{2n+2}(X)$ and $Sq^1:H^{4n+1}(X)\to H^{4n+2}(X)$ isomorphisms; \label{statement3}
\item There exists a map $f: S^{2n-1}\times P^{2n}(2) \to V_{2n+1,2}$ such that $f|_{S^{2n-1}\times\ast}$ and $f|_{\ast\times P^{2n}(2)}$ are the skeletal inclusions of $S^{2n-1}$ and $P^{2n}(2)$, respectively; \label{statement4}
\item $n =1$ or there exists an element $\theta \in \pi_{2n-2}^S$ of Kervaire invariant one such that $\theta\eta$ is divisible by $2$; \label{statement5}
\item $2n=2, 4, 8$ or $16$. \label{statement6}
\end{enumerate}
\end{theorem}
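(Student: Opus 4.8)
The plan is to prove all six conditions equivalent by establishing the cycle \ref{statement1} $\Leftrightarrow$ \ref{statement2} $\Leftrightarrow$ \ref{statement3} $\Leftrightarrow$ \ref{statement5} together with the chain \ref{statement1} $\Leftrightarrow$ \ref{statement4} $\Leftrightarrow$ \ref{statement6}; once every item is tied to \ref{statement1} the theorem follows. I would first dispatch \ref{statement1} $\Leftrightarrow$ \ref{statement4}, which is essentially already contained in Proposition \ref{Whitehead products}. The product $S^{2n-1}\times P^{2n}(2)$ is built from the wedge $S^{2n-1}\vee P^{2n}(2)$ by attaching $S^{2n-1}\wedge P^{2n}(2) = P^{4n-1}(2)$ along a Whitehead product, so a map $f$ as in \ref{statement4} exists if and only if the obstruction to extending $i'_{2n-1}\vee j'_{2n}$ over the product vanishes, and this obstruction is exactly $[i'_{2n-1}, j'_{2n}] \in \pi_{4n-2}(V_{2n+1,2}; \mathbb{Z}/2\mathbb{Z})$. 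Proposition \ref{Whitehead products} then identifies its vanishing with the divisibility of $[i_{2n}, i_{2n}]$ by $2$.

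Next I would establish \ref{statement1} $\Leftrightarrow$ \ref{statement2} $\Leftrightarrow$ \ref{statement3} by imitating the sphere-level arguments of \cite{C Course} and \cite{W}, carrying along the extra mod-$2$ Bockstein data. For \ref{statement1} $\Leftrightarrow$ \ref{statement2} I would work in the second James filtration $J_2(P^{2n+1}(2)) \subset \Omega\Sigma P^{2n+1}(2) = \Omega P^{2n+2}(2)$: writing $H_\ast(\Omega P^{2n+2}(2)) = T(a,b)$ with $|a|=2n$ and $|b|=2n+1$ via the Bott--Samelson theorem, a map $P^{4n}(2)\to\Omega P^{2n+2}(2)$ nonzero in homology is one hitting $a^2\in H_{4n}$, and its existence is governed by the divisibility of the relevant attaching map---which, by the computations behind Proposition \ref{Whitehead products}, is controlled by the bottom-cell Whitehead square $[i_{2n}, i_{2n}]$---exactly as $J_2(S^{2n-1}) = S^{2n-1}\cup_{[\iota_{2n-1},\iota_{2n-1}]}e^{4n-2}$ governs the classical statement. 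For \ref{statement2} $\Leftrightarrow$ \ref{statement3} I would adjoint the map of \ref{statement2} to $\widehat g\colon P^{4n+1}(2)\to P^{2n+2}(2)$ and take $X$ to be its mapping cone; this has cells precisely in degrees $2n+1, 2n+2, 4n+1, 4n+2$, the two $Sq^1$'s arise as the Bocksteins of the two Moore spaces, and $Sq^{2n}\colon H^{2n+1}(X)\to H^{4n+1}(X)$ is nonzero if and only if $\widehat g$ (equivalently $g$) is nonzero in homology, since dually the comultiplication $a\mapsto a^2$ is detected by $Sq^{2n}$.

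The main obstacle is the equivalence \ref{statement3} $\Leftrightarrow$ \ref{statement5}, where the delicate condition ``$\theta\eta$ divisible by $2$'' must be read off from the Steenrod-module structure of $X$. Here I would argue along the lines of Browder's analysis of the Kervaire invariant: the subquotient two-cell complex of $X$ carrying $Sq^{2n}\colon H^{2n+1}\to H^{4n+1}$ realizes, after stabilization, the secondary operation detecting a class $\theta\in\pi_{2n-2}^S$ of Kervaire invariant one, while the interaction of this $Sq^{2n}$ with the two Moore-space Bocksteins---through the Adem relation $Sq^1 Sq^{2n} = Sq^{2n+1}$ and the fact that $\eta$ is detected by $Sq^2$---is what encodes the divisibility of $\theta\eta$ by $2$. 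Getting this bookkeeping exactly right, in both the forward direction (extracting $\theta$ from $X$) and the reverse (assembling $X$ from $\theta$), is the crux of the argument.

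Finally I would establish \ref{statement4} $\Leftrightarrow$ \ref{statement6}. By \cite{S Indecomposability} no map as in \ref{statement4} exists when $2n\ne 2, 4, 8, 16$, so \ref{statement4} forces $2n\in\{2,4,8,16\}$; conversely such maps exist for $2n=2,4,8$ for reasons of Hopf invariant one, and in the remaining case $2n=16$ the computation $[i_{16}, i_{16}] = 2\widetilde{\sigma}_{16}^2$ of \cite{MSh} verifies \ref{statement1}, hence \ref{statement4}. This closes all the equivalences. As a byproduct, the combination \ref{statement5} $\Leftrightarrow$ \ref{statement6} obtained this way recovers the fact that Kervaire invariant one elements $\theta$ with $\theta\eta$ divisible by $2$ exist only for $2n=4, 8$ and $16$, deduced here from the geometry of $V_{2n+1,2}$ rather than from an independent classification.
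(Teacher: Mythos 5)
There is a genuine gap, and it is the one you yourself flag as ``the crux'': your architecture requires the two-way equivalence \ref{statement3} $\Leftrightarrow$ \ref{statement5}, and in particular the direction \ref{statement5} $\Rightarrow$ \ref{statement3}, where you must \emph{assemble} a four-cell complex $X$ with the prescribed unstable $Sq^{2n}$ from a stable Kervaire class $\theta$ with $\theta\eta$ divisible by $2$. No known machinery (Browder's secondary-operation analysis included) produces such an unstable complex directly from $\theta$; the only way anyone knows how to build $X$ is through the explicit geometric constructions available precisely for $2n=2,4,8,16$ --- Hopf invariant one for $2n\le 8$ and the Mukai--Shinpo computation $[i_{16},i_{16}]=2\widetilde{\sigma}_{16}^2$ for $2n=16$ --- i.e.\ by first establishing \ref{statement6} and then passing through \ref{statement4}. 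The paper sidesteps this entirely by arranging the hard conditions in a one-directional cycle \ref{statement4} $\Rightarrow$ \ref{statement3} $\Rightarrow$ \ref{statement5} $\Rightarrow$ \ref{statement6} $\Rightarrow$ \ref{statement4} (the first arrow via the Hopf construction on the map of \ref{statement4} and the splitting $\Sigma^2V_{2n+1,2}\simeq P^{2n+2}(2)\vee S^{4n+1}$, the middle two quoted from Selick, the last from Hopf invariant one and \cite{MSh}), combined with the two-way equivalences \ref{statement1} $\Leftrightarrow$ \ref{statement2} and \ref{statement1} $\Leftrightarrow$ \ref{statement4}. In a cycle each implication is needed only once, so \ref{statement5} $\Rightarrow$ \ref{statement3} comes for free and never has to be argued. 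You should restructure your proof the same way; as written, your plan turns the easiest part of the theorem into an open problem. (A related symptom: your closing remark that \ref{statement5} $\Leftrightarrow$ \ref{statement6} is ``deduced from the geometry of $V_{2n+1,2}$ rather than from an independent classification'' is circular, since the non-existence results of \cite{S Indecomposability} that you invoke for \ref{statement4} $\Rightarrow$ \ref{statement6} are themselves proved via the Kervaire invariant classification.)

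Two smaller points. First, your cycle \ref{statement1} $\Leftrightarrow$ \ref{statement2} $\Leftrightarrow$ \ref{statement3} also needs the direction \ref{statement3} $\Rightarrow$ \ref{statement2}, which you never address --- recovering a map $P^{4n}(2)\to\Omega P^{2n+2}(2)$ from an abstract complex $X$ requires splitting its skeleta into Moore spaces, and this is exactly the kind of step the paper's cyclic ordering avoids; your mapping-cone construction for \ref{statement2} $\Rightarrow$ \ref{statement3} is, however, a perfectly reasonable alternative to the paper's route \ref{statement4} $\Rightarrow$ \ref{statement3}. Second, your sketch of \ref{statement1} $\Leftrightarrow$ \ref{statement2} (``governed by the divisibility of the relevant attaching map'') has the right intuition but elides the actual content: the forward direction needs the second James--Hopf invariant together with the fact that $\ker\bigl(E_\ast:\pi_{4n-1}(P^{2n+1}(2))\to\pi_{4n}(P^{2n+2}(2))\bigr)$ is generated by $[i_{2n},i_{2n}]$, and the converse needs an argument (the paper uses the Pontrjagin square $u^2\in H_{4n}(\Omega P^{2n+2}(2))$ not being spherical) to rule out the map factoring through the pinch map $P^{4n}(2)\to S^{4n}$. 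Your treatment of \ref{statement1} $\Leftrightarrow$ \ref{statement4} via Proposition \ref{Whitehead products} and of \ref{statement4} $\Leftrightarrow$ \ref{statement6} agrees with the paper and is fine.
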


\begin{proof}
\ref{statement1} is equivalent to \ref{statement2}: In the $n=1$ case, $[\iota_2, \iota_2]=2\eta_2$ implies $[i_2, i_2]=0$, and since $\eta_3 \in \pi_4(S^3)$ has order $2$ its adjoint $\widetilde{\eta}_3:S^3 \to \Omega S^3$ extends to a map $P^4(2) \to \Omega S^3$. If this map desuspended, then $\widetilde{\eta}_3$ would be homotopic to a composite $S^3\to P^4(2)\to S^2 \xrightarrow{E} \Omega S^3$, a contradiction since $\pi_3(S^2)\cong \mathbb{Z}$ implies that any map $S^3 \to S^2$ that factors through $P^4(2)$ is nullhomotopic. Hence the map $P^4(2) \to \Omega S^3$ has nontrivial Hopf invariant in $[P^4(2), \Omega S^5]$ from which it follows that $P^4(2) \to \Omega S^3$ is nonzero in $H_4(\;)$. Composing with the inclusion $\Omega S^3\to \Omega P^4(2)$ gives a map $P^4(2) \to \Omega P^4(2)$ which is nonzero in $H_4(\;)$.

Now suppose $n>1$ and $[i_{2n}, i_{2n}] = 2\alpha$ for some $\alpha \in \pi_{4n-1}(P^{2n+1}(2))$. Then $\Sigma\alpha$ has order $2$ so there is an extension $P^{4n+1}(2) \to P^{2n+2}(2)$ whose adjoint $f: P^{4n}(2)\to \Omega P^{2n+2}(2)$ satisfies $f|_{S^{4n-1}} = E \circ \alpha$. Since $\Omega\Sigma(P^{2n+1}(2) \wedge P^{2n+1}(2))$ has $4n$-skeleton $S^{4n}$, to show that $f_\ast$ is nonzero on $H_{4n}(P^{4n}(2))$ it suffices to show that $H_2 \circ f$ is nontrivial in $[P^{4n}(2),  \Omega\Sigma(P^{2n+1}(2) \wedge P^{2n+1}(2))]$ where $H_2: \Omega P^{2n+2}(2) \to \Omega\Sigma(P^{2n+1}(2) \wedge P^{2n+1}(2))$ is the second James--Hopf invariant. If $H_2 \circ f$ is nullhomotopic, then there is a map $g:P^{4n}(2) \to P^{2n+1}(2)$ making the diagram
\[
\xymatrix{
P^{2n+1}(2) \ar[r]^-E  &  \Omega P^{2n+2}(2) \ar[r]^-{H_2}  &  \Omega\Sigma(P^{2n+1}(2) \wedge P^{2n+1}(2)) \\
P^{4n}(2) \ar[u]^g \ar[ur]_f
}
\]
commute. But then $\alpha - g|_{S^{4n-1}}$ is in the kernel of $E_\ast: \pi_{4n-1}(P^{2n+1}(2)) \to \pi_{4n}(P^{2n+2}(2))$ which is generated by $[i_{2n}, i_{2n}]$, so $\alpha - g|_{S^{4n-1}}$ is a multiple of $[i_{2n}, i_{2n}]$. Since $[i_{2n}, i_{2n}]$ has order $2$ and clearly $2g|_{S^{4n-1}}=0$, it follows that $[i_{2n}, i_{2n}]=2\alpha=0$, a contradiction. Therefore $f_\ast$ is nonzero on $H_{4n}(P^{4n}(2))$.

Conversely, assume $n>1$ and $f: P^{4n}(2)\to \Omega P^{2n+2}(2)$ is nonzero in $H_{4n}(\;)$. Since the restriction $f|_{S^{4n-1}}$ lifts through the $(4n-1)$-skeleton of $\Omega P^{2n+2}(2)$, there is a homotopy commutative diagram
\[
\xymatrix{
S^{4n-1} \ar[r] \ar[d]^g  &  P^{4n}(2) \ar[d]^f \\
P^{2n+1}(2) \ar[r]^-E  &  \Omega P^{2n+2}(2)
}
\]
for some map $g:S^{4n-1} \to P^{2n+1}(2)$. Since $E\circ 2g$ is nullhomotopic, $2g$ is a multiple of $[i_{2n}, i_{2n}]$. But if $2g=0$, then $g$ admits an extension $e:P^{4n}(2) \to P^{2n+1}(2)$ and it follows that $f-E\circ e$ factors through the pinch map $q:P^{4n}(2)\to S^{4n}$. This makes the Pontrjagin square $u^2 \in H_{4n}(\Omega P^{2n+2}(2))$ a spherical homology class, and this is a contradiction which can be seen as follows. If $u^2$ is spherical, then the $4n$-skeleton of $\Omega P^{2n+2}(2)$ is homotopy equivalent to $P^{2n+1}(2) \vee S^{4n}$. On the other hand, it is easy to see that the attaching map of the $4n$-cell in $\Omega P^{2n+2}(2)$ is given by the Whitehead square $[i_{2n}, i_{2n}]$ which is nontrivial as $n>1$, whence $P^{2n+1} \cup_{[i_{2n}, i_{2n}]} e^{4n} \not\simeq P^{2n+1}(2) \vee S^{4n}$.

\ref{statement1} is equivalent to \ref{statement4}: Since the Whitehead product $[i'_{2n-1}, j'_{2n}] \in \pi_{4n-2}(V_{2n+1,2}; \mathbb{Z}/2\mathbb{Z})$ is the obstruction to extending $i'_{2n-1} \vee j'_{2n}:S^{2n-1} \vee P^{2n}(2) \to V_{2n+1,2}$ to $S^{2n-1} \times P^{2n}(2)$, this follows immediately from Proposition \ref{Whitehead products}. 

As described in \cite{S Indecomposability}, applying the Hopf construction to a map $f: S^{2n-1}\times P^{2n}(2) \to V_{2n+1,2}$ as in \ref{statement4} yields a map $H(f):P^{4n}(2)\to \Sigma V_{2n+1,2}$ with $Sq^{2n}$ acting nontrivially on $H^{2n}(C_{H(f)})$. Since $\Sigma^2 V_{2n+1,2} \simeq P^{2n+2}(2)\vee S^{4n+1}$, composing the suspension of the Hopf construction $H(f)$ with a retract $\Sigma^2 V_{2n+1,2} \to P^{2n+2}(2)$ defines a map $g:P^{4n+1}(2) \to P^{2n+2}(2)$ with $Sq^{2n}$ acting nontrivially on $H^{2n+1}(C_g)$, so \ref{statement4} implies \ref{statement3}.

By the proof of \cite[Theorem 3.1]{S Indecomposability}, \ref{statement3} implies \ref{statement5}, and \ref{statement5} implies \ref{statement6}. 
The triviality of the Whitehead product $[i'_{2n-1}, j'_{2n}] \in \pi_{4n-2}(V_{2n+1,2}; \mathbb{Z}/2\mathbb{Z})$ when $n=1, 2$ or $4$ is implied by \cite[Theorem 2.1]{S Indecomposability}, for example, and Proposition \ref{Whitehead products} implies $[i'_{15}, j'_{16}] \in \pi_{30}(V_{17,2}; \mathbb{Z}/2\mathbb{Z})$ is trivial as well since $[i_{16}, i_{16}] \in \pi_{31}(P^{17}(2))$ is divisible by $2$ by \cite[Lemma 3.10]{MSh}. Thus \ref{statement6} implies \ref{statement4}.
\end{proof}

\section{A loop space decomposition of $J_3(S^2)$} 

In this section, we consider some relations between the fibre bundle $S^{4n-1} \to V_{4n+1,2} \to S^{4n}$ defined by projection onto the first vector of an orthonormal $2$-frame in $\mathbb{R}^{4n+1}$ (equivalently, the unit tangent bundle over $S^{4n}$) and the fibration $BW_n \to \Omega^2S^{4n+1}\{2\} \to W_{2n}$ of Lemma \ref{fibration}. Letting $\partial: \Omega S^{4n} \to S^{4n-1}$ denote the connecting map of the first fibration, we will show that there is a morphism of homotopy fibrations
\begin{equation} \label{diagram3}
\begin{gathered}
\xymatrix{
\Omega^2S^{4n} \ar[r]^-{\Omega\partial} \ar[d] & \Omega S^{4n-1} \ar[r] \ar[d] & \Omega V_{4n+1,2} \ar[d] \\
\Omega W_{2n} \ar[r] & BW_n \ar[r] & \Omega^2S^{4n+1}\{2\}
}
\end{gathered}
\end{equation}
from which it will follow that for $n=1,2$ or $4$, $\Omega\partial$ lifts through $\Omega\phi_n:\Omega^3S^{4n+1} \to \Omega S^{4n-1}$. If this lift can be chosen to be $\Omega^2 E$, then it follows that there is a homotopy pullback diagram
\begin{equation} \label{diagram4}
\begin{gathered}
\xymatrix{
\Omega^2V_{4n+1,2} \ar[r] \ar[d] & \Omega^2S^{4n} \ar[r]^{\Omega\partial} \ar[d]^{\Omega^2E} & \Omega S^{4n-1} \ar@{=}[d] \\
W_n \ar[r]^-{\Omega j} \ar[d] & \Omega^3S^{4n+1} \ar[r]^{\Omega\phi_n} \ar[d]^{\Omega^2H} & \Omega S^{4n-1} \\
\Omega^3S^{8n+1} \ar@{=}[r] & \Omega^3S^{8n+1}
}
\end{gathered}
\end{equation}
which identifies $\Omega^2V_{4n+1,2}$ with $\Omega M_3(n)$ where $\{M_k(n)\}_{k\ge 1}$ is the filtration of $BW_n$ studied in \cite{GTZ} beginning with the familiar spaces $M_1(n) \simeq\Omega S^{4n-1}$ and $M_2(n) \simeq S^{4n-1}\{\underline{2}\}$. (Spaces are localized at an odd prime throughout \cite{GTZ} but the construction of the filtration works in the same way for $p=2$.) We verify this (and deloop it) for $n=1$ since it leads to an interesting loop space decomposition which gives isomorphisms $\pi_k(V_{5,2}) \cong \pi_k(J_3(S^2))$ for all $k\ge 3$.

In his factorization of the $4^{th}$-power map on $\Omega^2S^{2n+1}$ through the double suspension, Theriault constructs in \cite{T 2-primary} a space $A$ and a map $\overline{E}: A\to \Omega S^{2n+1}\{2\}$ with the following properties:
\begin{enumerate}[label=(\alph*),ref=(\alph*)]
\item $H_\ast(A) \cong \Lambda(x_{2n-1},x_{2n})$ with Bockstein $\beta x_{2n}=x_{2n-1}$;
\item $\overline{E}$ induces a monomorphism in homology;
\item There is a homotopy fibration $S^{2n-1} \to A \to S^{2n}$ and a homotopy fibration diagram
\[
\xymatrix{
S^{2n-1} \ar[r] \ar[d]^{E^2} & A \ar[r] \ar[d]^{\overline{E}} & S^{2n} \ar[d]^E \\
\Omega^2S^{2n+1} \ar[r] & \Omega S^{2n+1}\{2\} \ar[r] & \Omega S^{2n+1}.
}
\]
\end{enumerate}
Noting that the homology of $A$ is isomorphic to the homology of the unit tangent bundle $\tau(S^{2n})$ as a coalgebra over the Steenrod algebra, Theriault raises the question of whether $A$ is homotopy equivalent to $\tau(S^{2n}) = V_{2n+1,2}$. Our next proposition shows this is true for any space $A$ with the properties above.

\begin{proposition}
There is a homotopy equivalence $A\simeq V_{2n+1,2}$.
\end{proposition}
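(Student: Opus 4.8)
The plan is to reduce the statement to a comparison of the top-cell attaching maps of $A$ and $V_{2n+1,2}$ as CW complexes, and then to pin that attaching map down using the fibration supplied by property (c). First I would extract the cell structure of $A$ from property (a): running the Bockstein spectral sequence on $\Lambda(x_{2n-1},x_{2n})$, the relation $\beta x_{2n}=x_{2n-1}$ together with $x_{2n-1}^2=0$ (so that $\beta(x_{2n-1}x_{2n})=0$) forces $\widetilde H_\ast(A;\mathbb Z)$ to be $\mathbb Z/2$ in degree $2n-1$ and $\mathbb Z$ in degree $4n-1$, which is exactly $\widetilde H_\ast(V_{2n+1,2};\mathbb Z)$. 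Hence $A$ is a four-cell complex with cells in dimensions $0,2n-1,2n,4n-1$, and the nontrivial Bockstein forces its $2n$-skeleton to be the Moore space $P^{2n}(2)=S^{2n-1}\cup_2 e^{2n}$. Thus $A\simeq P^{2n}(2)\cup_\alpha e^{4n-1}$ and, as recorded in Section 3, $V_{2n+1,2}\simeq P^{2n}(2)\cup_\lambda e^{4n-1}$ for attaching maps $\alpha,\lambda\in\pi_{4n-2}(P^{2n}(2))$. It therefore suffices to produce a self-homotopy-equivalence $h$ of $P^{2n}(2)$ with $h\circ\alpha\simeq\lambda$.

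Next I would exploit the fibration structure. Both $A$ and $V_{2n+1,2}$ are total spaces of $S^{2n-1}$-fibrations over $S^{2n}$, so each is classified by a clutching function in $\pi_{2n-1}(\mathrm{aut}_1(S^{2n-1}))$, and the top-cell attaching map is the image of this clutching under a natural map to $\pi_{4n-2}(P^{2n}(2))$. Under the evaluation fibration $\mathrm{aut}_{1,\ast}(S^{2n-1})\to\mathrm{aut}_1(S^{2n-1})\xrightarrow{\mathrm{ev}}S^{2n-1}$, the image of either clutching in $\pi_{2n-1}(S^{2n-1})\cong\mathbb Z$ records the degree of the bottom-cell attaching map, which the Bockstein pins to $\pm 2$ in both cases; after arranging signs, the two clutchings agree modulo the evaluation, so their difference lifts to $\pi_{2n-1}(\mathrm{aut}_{1,\ast}(S^{2n-1}))\cong\pi_{4n-2}(S^{2n-1})$. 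The homotopy-fibration diagram of property (c) compares the clutching of $A$ with that of the double-suspension fibration through the vertical maps $E\colon S^{2n}\to\Omega S^{2n+1}$ and $E^2\colon S^{2n-1}\to\Omega^2 S^{2n+1}$. Since $E$ is a $(4n-1)$-connected map and $E^2$ has fibre $W_n$, which is $(4n-3)$-connected (so $E^2$ is $(4n-2)$-connected), these maps are isomorphisms on homotopy below the dimension $4n-2$ in which the residual difference lives; thus the clutching of $A$ is detected by that of the known bottom fibration up to the image of $\pi_{4n-2}(W_n)$.

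I expect the crux to be controlling exactly this residual indeterminacy: a clutching difference in $\pi_{4n-2}(S^{2n-1})$ is invisible to the mod-$2$ cohomology and the Steenrod operations, which already match those of $\tau(S^{2n})$ by property (a), yet could in principle alter the homotopy type of the total space. I would close the gap using property (b). The generator of the residual ambiguity is the image of the inclusion of the bottom cell $S^{4n-2}\hookrightarrow W_n$, and modifying $\alpha$ by it either is absorbed by a self-equivalence of $P^{2n}(2)$ or registers nontrivially on $\overline E_\ast$; the hypothesis that $\overline E$ induces a monomorphism in homology rules out the latter, forcing $\alpha=\lambda$ up to equivalence and hence $A\simeq V_{2n+1,2}$. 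The main obstacle is thus the final step of verifying that this metastable difference cannot change the homotopy type, with the connectivity estimates for $E$ and $E^2$ and the homology-monomorphism hypothesis doing the essential work.
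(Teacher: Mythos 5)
Your reduction to a comparison of top-cell attaching maps is sound as far as it goes: properties (a) and the Bockstein do force $\mathrm{sk}_{2n}(A)\simeq P^{2n}(2)$ and hence $A\simeq P^{2n}(2)\cup_\alpha e^{4n-1}$, and $V_{2n+1,2}\simeq P^{2n}(2)\cup_\lambda e^{4n-1}$, so it would indeed suffice to identify $\alpha$ with $\lambda$ up to self-equivalence of $P^{2n}(2)$. The genuine gap is in the final step, which is where all the difficulty lives. You correctly locate the residual indeterminacy in a metastable group ($\pi_{4n-2}(S^{2n-1})$, or $\pi_{4n-2}(W_n)$ after comparing with the double suspension via property (c)), but the proposed dichotomy --- that modifying $\alpha$ by such an element ``either is absorbed by a self-equivalence of $P^{2n}(2)$ or registers nontrivially on $\overline{E}_\ast$'' --- is asserted, not proved, and the second horn cannot work as stated: changing the attaching map by a class that is trivial in mod-$2$ homology (as every element of $\pi_{4n-2}(S^{2n-1})$ pushed into $P^{2n}(2)$ is) does not alter $H_\ast(A)$ as a coalgebra over the Steenrod algebra, nor the induced map $\overline{E}_\ast$ in homology. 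Mod-$2$ homology simply cannot see this difference, so hypothesis (b) used ``pointwise'' in this way has no purchase on the indeterminacy. (There are also smaller inaccuracies: a homotopy fibration $S^{2n-1}\to A\to S^{2n}$ is not given as a bundle, so the clutching-function formalism needs justification, and $W_n$ is $(4n-4)$-connected with bottom cell in dimension $4n-3$, not $4n-2$.)

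It is worth seeing how the paper actually deploys property (b), because it is structural rather than obstruction-by-obstruction. The monomorphism $\overline{E}_\ast$ is combined with a retraction $\Sigma^2 Y\to P^{2n+2}(2)$ (where $Y=\mathrm{sk}_{4n-1}(\Omega S^{2n+1}\{2\})$) to produce a \emph{stable} splitting $\Sigma^2 A\simeq \Sigma^2(P^{2n}(2)\vee S^{4n-1})$; in your language this shows $\Sigma^2\alpha=0$, but more importantly it yields a map $QA\to QP^{2n}(2)$ which is an isomorphism on $H_{2n-1}$ and $H_{2n}$. Both $A$ and $V_{2n+1,2}$ are then mapped into the homotopy fibre $F$ of the map $QP^{2n}(2)\to K(\mathbb{Z}/2\mathbb{Z},4n-2)$ representing $u_{2n-1}^2$, and the coalgebra structure of $H_\ast(A)$ forces each lift to be a homology isomorphism onto $\mathrm{sk}_{4n-1}(F)$. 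This single ``universal'' target absorbs exactly the indeterminacy your argument leaves open. If you want to salvage your route, you would need an independent computation of the orbit of $\lambda$ in $\pi_{4n-2}(P^{2n}(2))$ under self-equivalences of $P^{2n}(2)$ together with a proof that $\alpha$ lies in it; the homology-monomorphism hypothesis alone will not deliver that.
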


\begin{proof}
First we show that $A$ splits stably as $P^{2n}\vee S^{4n-1}$. As in \cite{T 2-primary}, let $Y$ denote the $(4n-1)$-skeleton of $\Omega S^{2n+1}\{2\}$. Consider the homotopy fibration 
\[ \Omega S^{2n+1}\{2\} \xrightarrow{\mathmakebox[0.5cm]{}} \Omega S^{2n+1} \xrightarrow{\mathmakebox[0.5cm]{2}} \Omega S^{2n+1} \]
and recall that $H_\ast(\Omega S^{2n+1}\{2\}) \cong H_\ast(\Omega S^{2n+1}) \otimes H_\ast(\Omega^2 S^{2n+1})$.
Restricting the fibre inclusion to $Y$ and suspending once we obtain a homotopy commutative diagram
\[
\xymatrix{
 & & S^{2n+1} \ar[d] \ar[r]^{\underline{2}} & S^{2n+1} \ar[d] \\
\Sigma Y \ar[urr]^\ell \ar[r] & \Sigma\Omega S^{2n+1}\{2\} \ar[r] & \Sigma\Omega S^{2n+1} \ar[r]^{\Sigma 2} & \Sigma\Omega S^{2n+1}
}
\]
where $\underline{2}$ is the degree $2$ map, the vertical maps are inclusions of the bottom cell of $\Sigma\Omega S^{2n+1}$ and a lift $\ell$ inducing an isomorphism in $H_{2n+1}(\;)$ exists since $\Sigma Y$ is a $4n$-dimensional complex and $\mathrm{sk}_{4n}(\Sigma\Omega S^{2n+1}) = S^{2n+1}$. It follows from the James splitting $\Sigma\Omega S^{2n+1} \simeq \bigvee_{i=1}^\infty S^{2ni+1}$ and the commutativity of the diagram that $\underline{2} \circ \ell$ is nullhomotopic, so in particular $\Sigma\ell$ lifts to the fibre $S^{2n+2}\{\underline{2}\}$ of the degree $2$ map on $S^{2n+2}$. Since $H_\ast(S^{2n+2}\{\underline{2}\}) \cong \mathbb{Z}/2\mathbb{Z}[u_{2n+1}] \otimes \Lambda(v_{2n+2})$ with $\beta v_{2n+2}=u_{2n+1}$, this implies $\Sigma\ell$ factors through a map $r:\Sigma^2Y \to P^{2n+2}(2)$ which is an epimorphism in homology by naturality of the Bockstein, and hence $P^{2n+2}(2)$ is a retract of $\Sigma^2Y$. (Alternatively, $r$ can be obtained by suspending a lift $\Sigma Y \to S^{2n+1}\{\underline{2}\}$ of $\ell$ and using the well-known fact that $\Sigma S^{2n+1}\{\underline{2}\}$ splits as a wedge of Moore spaces.) Now since $\overline{E}:A\to \Omega S^{2n+1}\{2\}$ factors through $Y$ and induces a monomorphism in homology, composing $\Sigma^2A \to \Sigma^2Y$ with the retraction $r$ shows that $\Sigma^2A \simeq \Sigma^2(P^{2n}(2)\vee S^{4n-1})$.

Next, let $E^\infty: A \to QA$ denote the stabilization map and let $F$ denote the homotopy fibre of a map $g: QP^{2n}(2) \to K(\mathbb{Z}/2\mathbb{Z}, 4n-2)$ representing the mod-$2$ cohomology class $u_{2n-1}^2 \in H^{4n-2}(QP^{2n}(2))$. A homology calculation shows that the $(4n-1)$-skeleton of $F$ is a three-cell complex with homology isomorphic to $\Lambda(x_{2n-1},x_{2n})$ as a coalgebra. The splitting $\Sigma^2A \simeq \Sigma^2(P^{2n}(2)\vee S^{4n-1})$ gives rise to a map $\pi_1:QA \simeq QP^{2n}(2) \times QS^{4n-1} \to QP^{2n}(2)$ inducing isomorphisms in $H_{2n-1}(\;)$ and $H_{2n}(\;)$, and since the composite $g \circ \pi_1 \circ E^\infty: A \to K(\mathbb{Z}/2\mathbb{Z}, 4n-2)$ is nullhomotopic, there is a lift $A \to F$ inducing isomorphisms in $H_{2n-1}(\;)$ and $H_{2n}(\;)$. The coalgebra structure of $H_\ast(A)$ then implies this lift is a $(4n-1)$-equivalence and the result follows as $V_{2n+1,2}$ can similarly be seen to be homotopy equivalent to the $(4n-1)$-skeleton of $F$.
\end{proof}

The homotopy commutative diagram \eqref{diagram3} is now obtained by noting that the composite $\Omega S^{4n-1} \xrightarrow{} \Omega V_{4n+1,2} \xrightarrow{\Omega\overline{E}} \Omega^2S^{4n+1}\{2\}$ is homotopic to $\Omega S^{4n-1} \xrightarrow{\Omega E^2} \Omega^3S^{4n+1} \to \Omega^2S^{4n+1}\{2\}$, which in turn is homotopic  to a composite $\Omega S^{4n-1} \to BW_n \to \Omega^2S^{4n+1}\{2\}$ since by Theorem \ref{Richter} there is a homotopy fibration diagram
\[
\xymatrix{ 
\Omega S^{4n-1} \ar[d]^{\Omega E^2} \ar[r] & BW_n \ar[d] \ar[r]^j & \Omega^2S^{4n+1} \ar@{=}[d] \ar[r]^{\phi_n} & S^{4n-1} \ar[d]^{E^2} \\
\Omega^3S^{4n+1} \ar[r] & \Omega^2S^{4n+1}\{2\} \ar[r] & \Omega^2S^{4n+1} \ar[r]^2 & \Omega^2S^{4n+1}.
}
\]

Specializing to the case $n=1$, the proof of Proposition \ref{J_3(S^2)} will show that $\Omega V_{5,2}$ fits in a delooping of diagram \eqref{diagram4} and hence that $\Omega V_{5,2} \simeq M_3(1).$ We will need the following cohomological characterization of $V_{5,2}$.

\begin{lemma} \label{Stiefel}
Let $E$ be the total space of a fibration $S^3 \to E \to S^4$. If $E$ has integral cohomology group $H^4(E; \mathbb{Z})=\mathbb{Z}/2\mathbb{Z}$ and mod-$2$ cohomology ring $H^\ast(E)$ an exterior algebra $\Lambda(u,v)$ with $|u|=3$ and $|v|=4$, then $E$ is homotopy equivalent to the Stiefel manifold $V_{5,2}$.
\end{lemma}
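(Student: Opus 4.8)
The plan is to produce a homology isomorphism $E\to V_{5,2}$ between simply connected CW complexes and invoke Whitehead's theorem, so the first task is to match the two homotopy types additively. From the long exact homotopy sequence of $S^3\to E\to S^4$ one gets $\pi_1(E)=0$, and in fact $E$ is $2$-connected. The hypothesis $H^4(E;\mathbb{Z})\cong\mathbb{Z}/2\mathbb{Z}$ forces the transgression $d_4\colon E_4^{0,3}\to E_4^{4,0}$ in the integral Serre spectral sequence to be multiplication by $\pm 2$; equivalently, the Euler number of the fibration is $\pm2$. Reading off the spectral sequence then gives $H_\ast(E;\mathbb{Z})\cong(\mathbb{Z},0,0,\mathbb{Z}/2\mathbb{Z},0,0,0,\mathbb{Z})$ in degrees $0$ through $7$, exactly as for $V_{5,2}$. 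Since both spaces are simply connected, any map inducing isomorphisms on $H_3(\,\cdot\,;\mathbb{Z})$ and $H_7(\,\cdot\,;\mathbb{Z})$ is a homotopy equivalence.

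I would build the comparison skeleton by skeleton and reduce it to a single attaching map. The $3$-skeleton of $E$ is the fibre $S^3$, carrying the generator of $\pi_3(E)\cong\mathbb{Z}/2\mathbb{Z}$, which I send to $V_{5,2}$ by the generator of $\pi_3(V_{5,2})\cong\mathbb{Z}/2\mathbb{Z}$. The cellular chain complex of $E$ shows the $4$-cell is attached by a map $S^3\to S^3$ of degree $\pm2$, so $E^{(4)}\simeq P^4(2)$; because $2$ annihilates $\pi_3(V_{5,2})$, the map extends over the $4$-cell and can be taken to be the skeletal inclusion $P^4(2)=V_{5,2}^{(4)}\hookrightarrow V_{5,2}$. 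Writing $E\simeq P^4(2)\cup_\alpha e^7$ with $\alpha\in\pi_6(P^4(2))$, and likewise $V_{5,2}\simeq P^4(2)\cup_\beta e^7$, the whole problem reduces to comparing $\alpha$ and $\beta$: if they agree up to a self-homotopy-equivalence of $P^4(2)$ and a reparametrization of the top cell, then $E\simeq V_{5,2}$. (Concretely, extending the skeletal inclusion over $e^7$ is obstructed by $\mathrm{incl}_\ast(\alpha)\in\pi_6(V_{5,2})$, whose vanishing is equivalent to $\alpha$ lying in the cyclic subgroup generated by $\beta$, since that subgroup is the kernel of $\mathrm{incl}_\ast$.)

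It is worth noting that the cohomology ring hypothesis $H^\ast(E)\cong\Lambda(u,v)$ carries little extra information: for the total space of such a fibration the nonvanishing of $uv$ is forced by Poincaré duality, so the ring structure does not by itself distinguish the homotopy type among $S^3$-fibrations over $S^4$ of Euler number $\pm2$, and the fibration hypothesis must be used essentially. Accordingly, I would describe $\alpha$ through the clutching data of the fibration---equivalently the connecting map $\partial\colon\Omega S^4\to S^3$, whose restriction to the bottom cell $S^3\subset\Omega S^4$ is $\pm2\,\iota_3$---and argue that, modulo self-equivalences of $P^4(2)$, the class of $\alpha$ depends only on this Euler number and not on the secondary part of $\partial$ lying in $\pi_6(S^3)\cong\mathbb{Z}/12\mathbb{Z}$. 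This is the homotopy-theoretic shadow of the fact that the various smooth $S^3$-bundles over $S^4$ with Euler number $\pm2$, distinguished by a Pontryagin-type invariant, should all share the total-space homotopy type of $V_{5,2}$, just as the Milnor bundles with Euler number $1$ are all homotopy $7$-spheres.

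The main obstacle is precisely this last point: controlling $\alpha$ in the \emph{unstable} homotopy group $\pi_6(P^4(2))$, which lies outside the stable range. I would pin it down using the known computations of the homotopy of mod-$2$ Moore spaces (as in \cite{M attaching}) together with the structure of $\partial$, aiming to enumerate the finitely many orbits of $\pi_6(P^4(2))$ under self-equivalences of $P^4(2)$, to observe that only the orbit of $\beta$ is realized by an $S^3$-fibration over $S^4$ of Euler number $\pm2$, and hence to conclude that $\alpha$ and $\beta$ lie in this common orbit, so that $E\simeq P^4(2)\cup_\beta e^7\simeq V_{5,2}$.
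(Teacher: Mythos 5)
Your reduction is sound and agrees with the paper's framing: $E$ is $2$-connected with $E^{(4)}\simeq P^4(2)$, so $E\simeq P^4(2)\cup_\alpha e^7$ for some $\alpha\in\pi_6(P^4(2))$, and the lemma amounts to showing $\alpha$ agrees with the attaching map $\lambda$ of $V_{5,2}$ up to sign. But that identification is the entire content of the lemma, and your proposal stops exactly there: ``aiming to enumerate the finitely many orbits \dots to observe that only the orbit of $\beta$ is realized'' is a statement of intent, not an argument. The specific claim you would need --- that the class of $\alpha$ depends only on the Euler number of the fibration and not on the secondary part of the connecting map in $\pi_6(S^3)\cong\mathbb{Z}/12\mathbb{Z}$ --- is essentially equivalent to the lemma itself (since $V_{5,2}$ realizes Euler number $2$), so as written the plan is circular; the analogy with Milnor's bundles is a heuristic, not a proof, and in any case the hypothesis is a fibration, not a bundle, so clutching data is not directly available.

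The paper closes this gap with two concrete inputs that are absent from your outline. First, $\pi_6(P^4(2))\cong\mathbb{Z}/4\mathbb{Z}\{\lambda\}\oplus\mathbb{Z}/2\mathbb{Z}\{\widetilde{\eta}_3^2\}$ (from Wu's computation), so $\alpha=a\lambda+b\widetilde{\eta}_3^2$. Second, the two coefficients are pinned down separately: (i) the ring hypothesis $H^\ast(E)\cong\Lambda(u,v)$ forces $H_\ast(\Omega E)\cong\mathbb{Z}/2\mathbb{Z}[u_2,v_3]$, and since $\Omega P^4(2)\to\Omega E$ induces the abelianization $T(u_2,v_3)\to\mathbb{Z}/2\mathbb{Z}[u_2,v_3]$, the adjoint of $\alpha$ has Hurewicz image the commutator $[u_2,v_3]\ne 0$, so $\alpha$ is not divisible by $2$ (note this is precisely where the cup-product hypothesis is used, contrary to your remark that it ``carries little extra information''); (ii) because $E$ fibres over $S^4$, the pinch map $q\colon P^4(2)\to S^4$ extends over $E$, hence $q\circ\alpha=0$, which kills the $\widetilde{\eta}_3^2$-component and gives $b=0$. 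Together these force $\alpha=\pm\lambda$. Without step (i) you cannot exclude $\alpha=2\lambda$ or $2\lambda+\widetilde{\eta}_3^2$, and without step (ii) you cannot exclude $\alpha=\pm\lambda+\widetilde{\eta}_3^2$; your additive homology computation alone does not distinguish these cases, so the gap is genuine.
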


\begin{proof}
As shown in \cite[Theorem 5.8]{W}, the top row of the homotopy pullback diagram
\[
\xymatrix{
X^4 \ar[r] \ar[d] & P^4(2) \ar[r] \ar[d]^q & BS^3 \ar@{=}[d] \\
S^7 \ar[r]^{\nu} & S^4 \ar[r] & BS^3
}
\]
induces a split short exact sequence
\[ 0 \xrightarrow{\mathmakebox[0.5cm]{}} \mathbb{Z}/4\mathbb{Z} \xrightarrow{\mathmakebox[0.5cm]{}} \pi_6(P^4(2)) \xrightarrow{\mathmakebox[0.5cm]{}} \pi_5(S^3) \xrightarrow{\mathmakebox[0.5cm]{}} 0 \]
from which it follows that $\pi_6(P^4(2)) = \mathbb{Z}/4\mathbb{Z}\{\lambda\} \oplus \mathbb{Z}/2\mathbb{Z}\{\widetilde{\eta}_3^2\}$ where $\lambda$ is the attaching map of the top cell of $V_{5,2}$ and $\widetilde{\eta}_3^2$ maps to the generator $\eta_3^2$ of $\pi_5(S^3)$. It follows from the cohomological assumptions that $E \simeq P^4(2)\cup_f e^7$, where $f=a\lambda + b\widetilde{\eta}_3^2$ for some $a \in \mathbb{Z}/4\mathbb{Z}$, $b \in \mathbb{Z}/2\mathbb{Z}$, and that $H_\ast(\Omega E)$ is isomorphic to a polynomial algebra $\mathbb{Z}/2\mathbb{Z}[u_2,v_3]$. Since the looped inclusion $\Omega P^4(2) \to \Omega E$ induces the abelianization map $T(u_2,v_3) \to \mathbb{Z}/2\mathbb{Z}[u_2,v_3]$ in homology, it is easy to see that the adjoint $f':S^5 \to \Omega P^4(2)$ of $f$ has Hurewicz image $[u_2, v_3]=u_2\otimes v_3 + v_3\otimes u_2$ and hence $f$ is not divisible by $2$. Moreover, since $E$ is an $S^3$-fibration over $S^4$, the pinch map $q:P^4(2)\to S^4$ must extend over $E$. This implies the composite $S^6 \xrightarrow{f} P^4(2) \xrightarrow{q} S^4$ is nullhomotopic and therefore $b=0$ by the commutativity of the diagram above. It now follows that $f=\pm \lambda$ which implies $E \simeq V_{5,2}$.
\end{proof}

\begin{proposition} \label{J_3(S^2)}
There is a homotopy fibration
$$V_{5,2} \xrightarrow{\mathmakebox[0.5cm]{}} J_3(S^2) \xrightarrow{\mathmakebox[0.5cm]{}} K(\mathbb{Z}, 2)$$
which is split after looping.
\end{proposition}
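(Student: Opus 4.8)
The plan is to produce the fibration in two movements---first the base map and the identification of the fibre, then the looped splitting---working $2$-locally throughout (as the integral statement fails: the homotopy fibre below carries $3$-torsion in degree $6$). Recall that $H^\ast(J_3(S^2);\mathbb{Z})$ is the truncated divided power algebra on a class $\gamma$ of degree $2$, so $H^2(J_3(S^2);\mathbb{Z})\cong\mathbb{Z}$. I would let $\kappa\colon J_3(S^2)\to K(\mathbb{Z},2)$ represent a generator and take its homotopy fibre $F$, equivalently the total space of the principal $S^1$-bundle over $J_3(S^2)$ with Euler class $\gamma$. This gives the candidate fibration $F\to J_3(S^2)\xrightarrow{\kappa}K(\mathbb{Z},2)$, and everything reduces to showing $F\simeq V_{5,2}$.

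To compute $H^\ast(F)$ I would run the Gysin sequence of this circle bundle, where the ring structure of $H^\ast(J_3(S^2))$ is decisive: cup product with $\gamma$ carries the degree $2$ generator to $\gamma^2=2\gamma_2$, acting as multiplication by $2$ into $H^4$, whereas $\gamma\cdot\gamma_2=3\gamma_3$ is a $2$-local unit multiple of the top class. This produces $H^4(F;\mathbb{Z})\cong\mathbb{Z}/2$ and, $2$-locally, $H^\ast(F;\mathbb{Z}/2)\cong\Lambda(u_3,v_4)$ with $Sq^1u_3=v_4$; in particular the $4$-skeleton of $F$ is the Moore space $P^4(2)$ and $F$ has exactly the $2$-local cohomology of $V_{5,2}$.

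The heart of the argument is to upgrade this cohomological match to an equivalence by realizing $F$ as the total space of a fibration $S^3\to F\to S^4$, at which point Lemma \ref{Stiefel} applies. For this I would invoke the second James--Hopf invariant $H_2\colon J_3(S^2)\to J(S^4)=\Omega S^5$; since $J_3(S^2)$ is six-dimensional and the next cell of $\Omega S^5$ sits in dimension $8$, this compresses to a map $J_3(S^2)\to S^4$ detecting $\gamma_2$. Restricting along the fibre inclusion gives $\bar h\colon F\to S^4$, which is surjective on $H^4(\,;\mathbb{Z}/2)$, and I would show by a Serre spectral sequence computation---with transgression equal to multiplication by the Euler number $2$, exactly as for the unit tangent bundle of $S^4$---that the homotopy fibre of $\bar h$ is a $2$-local homology $3$-sphere, hence $S^3$. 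With $F$ thus presented as an $S^3$-fibration over $S^4$ having $H^4(F;\mathbb{Z})\cong\mathbb{Z}/2$ and exterior mod-$2$ cohomology, Lemma \ref{Stiefel} yields $F\simeq V_{5,2}$. I expect this step to be the main obstacle: the cohomology determines $F$ only through its cells and Steenrod module structure, and promoting this to a genuine spherical-fibre fibration is the delicate point. (An alternative would be to write $F=P^4(2)\cup_f e^7$ and pin down $f$ in $\pi_6(P^4(2))\cong\mathbb{Z}/4\{\lambda\}\oplus\mathbb{Z}/2\{\widetilde\eta_3^2\}$ as $\pm\lambda$, using that $u_3v_4\neq0$ forces $f$ to be indivisible by $2$, which mirrors the proof of Lemma \ref{Stiefel}.)

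Finally I would split the looped fibration. Looping $\kappa$ gives $\Omega V_{5,2}\to\Omega J_3(S^2)\xrightarrow{\Omega\kappa}\Omega K(\mathbb{Z},2)=S^1$, and since $\kappa$ is an isomorphism on $\pi_2$, the generator of $\pi_1(\Omega J_3(S^2))\cong\pi_2(J_3(S^2))\cong\mathbb{Z}$ defines a section $s\colon S^1\to\Omega J_3(S^2)$ of $\Omega\kappa$. Combining the fibre inclusion with $s$ through the loop multiplication $\mu$,
\[
\Omega V_{5,2}\times S^1\xrightarrow{\ \iota\times s\ }\Omega J_3(S^2)\times\Omega J_3(S^2)\xrightarrow{\ \mu\ }\Omega J_3(S^2),
\]
is a map of fibrations over $S^1$ restricting to the identity on the fibre $\Omega V_{5,2}$, hence a homotopy equivalence $\Omega J_3(S^2)\simeq\Omega V_{5,2}\times S^1$. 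Comparing the resulting retract $\Omega V_{5,2}$ of $\Omega J_3(S^2)$ with the once-delooped form of diagram \eqref{diagram4} then identifies $\Omega V_{5,2}\simeq M_3(1)$ and gives the isomorphisms $\pi_k(V_{5,2})\cong\pi_k(J_3(S^2))$ for $k\ge 3$.
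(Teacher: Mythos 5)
Your overall architecture matches the paper's: take the fibre $F$ of a map $J_3(S^2)\to K(\mathbb{Z},2)$ representing a generator of $H^2$, identify $F$ with $V_{5,2}$ via Lemma \ref{Stiefel}, and split after looping using a section of $\Omega\kappa$ over $S^1$ (the paper equivalently observes that the connecting map $S^1\to V_{5,2}$ is null since $V_{5,2}$ is simply connected). The Gysin computation of $H^\ast(F)$ is correct, and your observation that $H^6(F;\mathbb{Z})$ carries $3$-torsion, so the statement is genuinely $2$-local, is a good one. The problem is the step you yourself flag as the main obstacle: producing the fibration $S^3\to F\to S^4$ required by Lemma \ref{Stiefel}. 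The Serre spectral sequence of $\mathrm{fib}(\bar h)\to F\xrightarrow{\bar h}S^4$ does not determine the fibre from the base and total space: the computation closes up through degree $5$, but in degree $6$ and above it only shows $d_4\colon H^q(\mathrm{fib}(\bar h))\to H^{q-3}(\mathrm{fib}(\bar h))$ is eventually an isomorphism, which is compatible with $3$-periodic nonzero cohomology in all high degrees (as happens for fibres of maps to $S^4$ in general). More concretely, the fibre depends on the actual homotopy class of $\bar h$, not just on $\bar h^\ast$ in degree $4$: modifying a map $F\to S^4$ by a composite $F\to S^7\xrightarrow{\alpha}S^4$ with the pinch to the top cell leaves the effect on $H^4$ unchanged but can change the homotopy fibre. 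Your parenthetical alternative has the same gap in disguise: indivisibility of the attaching map $f$ only forces its $\lambda$-coefficient to be odd, leaving the possibilities $\pm\lambda+\widetilde\eta_3^2$, and it is exactly the existence of the spherical fibration (via the extension of the pinch map $q$ over $E$) that Lemma \ref{Stiefel} uses to kill the $\widetilde\eta_3^2$ component.

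The paper closes this gap by building the spherical fibration first rather than extracting it afterwards. Two-locally $J_3(S^2)$ is the homotopy fibre of the iterated James--Hopf invariant $H^{\circ 2}\colon\Omega S^3\to\Omega S^9$, so the induced map of fibres $J_3(S^2)\to\mathrm{fib}(H\colon\Omega S^5\to\Omega S^9)=S^4$ is a fibration with fibre $\mathrm{fib}(H\colon\Omega S^3\to\Omega S^5)=S^2$; equivalently, the paper forms $P$ as the pullback of $E\colon S^4\to\Omega S^5$ along $h\colon\Omega S^3\langle 3\rangle\to\Omega S^5$, whose fibre is $S^3$, so $P\to S^4$ is an $S^3$-fibration by construction and Lemma \ref{Stiefel} applies immediately. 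If you want to salvage your version, replace the spectral sequence step by this fibrewise argument: your $\bar h$ should be taken to be the map of fibres induced by $\kappa$ over the fibration $S^2\to J_3(S^2)\to S^4$, whose fibre is then the total space of the Hopf bundle over the bottom $S^2$, namely $S^3$. Finally, the closing sentence about $\Omega V_{5,2}\simeq M_3(1)$ is not needed for the proposition and is asserted rather than proved.
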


\begin{proof}
Let $h$ denote the composite $\Omega S^3\langle 3\rangle \xrightarrow{} \Omega S^3 \xrightarrow{H} \Omega S^5$ and consider the pullback
\[
\xymatrix{
P \ar[r] \ar[d] & S^4 \ar[d]^E \\
\Omega S^3\langle 3\rangle \ar[r]^-h & \Omega S^5.
}
\]
Since $h$ has homotopy fibre $S^3$, so does the map $P\to S^4$. Next, observe that $P$ is the homotopy fibre of the composite $\Omega S^3\langle 3\rangle \xrightarrow{h} \Omega S^5 \xrightarrow{H} \Omega S^9$ and since $\Omega S^9$ is $7$-connected, the inclusion of the $7$-skeleton of $\Omega S^3\langle 3\rangle$ lifts to a map $\mathrm{sk}_7(\Omega S^3\langle 3\rangle) \to P$. Recalling that $H^4(\Omega S^3\langle 3\rangle; \mathbb{Z}) \cong \mathbb{Z}/2\mathbb{Z}$ and $H_\ast(\Omega S^3\langle 3\rangle) \cong \Lambda(u_3) \otimes \mathbb{Z}/2\mathbb{Z}[v_4]$ with generators in degrees $|u_3|=3$ and $|v_4|=4$, it follows that this lift must be a homology isomorphism and hence a homotopy equivalence. So $P$ is homotopy equivalent to the total space of a fibration satisfying the hypotheses of Lemma \ref{Stiefel} and there is a homotopy equivalence $P\simeq V_{5,2}$.

It is well known that the iterated composite of the $p^{th}$ James--Hopf invariant $H^{\circ k}: \Omega S^{2n+1} \to \Omega S^{2np^k+1}$ has homotopy fibre $J_{p^k-1}(S^{2n})$, the $(p^k-1)^{st}$ stage of the James construction on $S^{2n}$. The argument above identifies $V_{5,2}$ with the homotopy fibre of the composite
\[ \Omega S^3\langle 3\rangle \xrightarrow{\mathmakebox[0.5cm]{}} \Omega S^3 \xrightarrow{\mathmakebox[0.5cm]{H}} \Omega S^5 \xrightarrow{\mathmakebox[0.5cm]{H}} \Omega S^9, \]
so there is a homotopy pullback diagram
\[
\xymatrix{
V_{5,2} \ar[r] \ar[d] & J_3(S^2) \ar[r] \ar[d] & K(\mathbb{Z}, 2) \ar@{=}[d] \\
\Omega S^3\langle 3\rangle \ar[r] \ar[d]^{H\circ h} & \Omega S^3 \ar[r] \ar[d]^{H^{\circ 2}} & K(\mathbb{Z}, 2) \\
\Omega S^9 \ar@{=}[r] & \Omega S^9
}
\]
where the maps into $K(\mathbb{Z}, 2)$ represent generators of $H^2(J_3(S^2); \mathbb{Z}) \cong \mathbb{Z}$ and $H^2(\Omega S^3; \mathbb{Z}) \cong \mathbb{Z}$. To see that the homotopy fibration along the top row splits after looping, note that the connecting map $\Omega K(\mathbb{Z}, 2)=S^1 \to V_{5,2}$ is nullhomotopic since $V_{5,2}$ is simply-connected. Therefore the looped projection map $\Omega J_3(S^2) \to S^1$ has a right homotopy inverse producing a splitting $\Omega J_3(S^2) \simeq S^1 \times \Omega V_{5,2}$.
\end{proof}

\begin{corollary}
$\pi_k(J_3(S^2)) \cong \pi_k(V_{5,2})$ for all $k\ge 3$.
\end{corollary}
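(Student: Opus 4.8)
The plan is to read the corollary directly off the homotopy fibration
$$V_{5,2} \xrightarrow{\mathmakebox[0.5cm]{}} J_3(S^2) \xrightarrow{\mathmakebox[0.5cm]{}} K(\mathbb{Z}, 2)$$
established in Proposition \ref{J_3(S^2)}. First I would record the one structural fact about the base that drives everything: since $K(\mathbb{Z},2)$ has homotopy concentrated in a single degree, $\pi_j(K(\mathbb{Z},2)) \cong \mathbb{Z}$ for $j=2$ and $\pi_j(K(\mathbb{Z},2)) = 0$ for all $j \neq 2$.

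Next I would write out the relevant four-term portion of the long exact sequence of homotopy groups induced by the fibre inclusion $V_{5,2} \to J_3(S^2)$:
$$\pi_{k+1}(K(\mathbb{Z},2)) \xrightarrow{\mathmakebox[0.5cm]{}} \pi_k(V_{5,2}) \xrightarrow{\mathmakebox[0.5cm]{}} \pi_k(J_3(S^2)) \xrightarrow{\mathmakebox[0.5cm]{}} \pi_k(K(\mathbb{Z},2)).$$
For any $k \ge 3$ both outer terms vanish, as $k \ge 3 > 2$ and $k+1 \ge 4 > 2$. Exactness then forces the middle map $\pi_k(V_{5,2}) \to \pi_k(J_3(S^2))$ to be injective (its kernel is the image of the trivial group $\pi_{k+1}(K(\mathbb{Z},2))$) and surjective (its image is the kernel of the map into the trivial group $\pi_k(K(\mathbb{Z},2))$), hence an isomorphism. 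Alternatively, one can bypass the long exact sequence entirely and use the looped splitting $\Omega J_3(S^2) \simeq S^1 \times \Omega V_{5,2}$ from Proposition \ref{J_3(S^2)}: this gives
$$\pi_k(J_3(S^2)) \cong \pi_{k-1}(S^1) \oplus \pi_{k-1}(\Omega V_{5,2}),$$
and for $k \ge 3$ the first summand vanishes while the second is $\pi_k(V_{5,2})$.

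I do not expect any genuine obstacle here: once Proposition \ref{J_3(S^2)} is in hand the argument is entirely formal, and the corollary is really just a bookkeeping consequence of the homotopy of $K(\mathbb{Z},2)$ being concentrated in degree $2$. The only point meriting attention is the boundary case $k=3$, where one must confirm that \emph{both} $\pi_3(K(\mathbb{Z},2))$ and $\pi_4(K(\mathbb{Z},2))$ vanish so that the isomorphism still holds at the lowest relevant dimension; this is immediate from the Eilenberg--MacLane property, and it also explains why the statement is phrased for $k \ge 3$ rather than all $k$, since at $k=2$ the nontrivial group $\pi_2(K(\mathbb{Z},2)) \cong \mathbb{Z}$ genuinely enters the sequence.
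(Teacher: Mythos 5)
Your argument is correct and is exactly the intended one: the paper leaves this corollary without explicit proof because it is an immediate consequence of Proposition \ref{J_3(S^2)}, read off either from the long exact sequence of the fibration $V_{5,2} \to J_3(S^2) \to K(\mathbb{Z},2)$ or from the looped splitting $\Omega J_3(S^2) \simeq S^1 \times \Omega V_{5,2}$, precisely as you do. Both of your routes are valid and your remark about the boundary case $k=3$ correctly explains the range in the statement.
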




\begin{thebibliography}{99}

\bibitem{CCPS} H. E. A. Campbell, F. R. Cohen, F. P. Peterson and P. S. Selick, \emph{The space of maps of Moore spaces into spheres}, Proc. of John Moore Conf. on Alg. Top. and Alg. K-Theory, Ann. of Math. Studies vol. 113, Princeton Univ. Press, Princeton (1987), 72--100.

\bibitem{C Course} F. R. Cohen, \emph{A course in some aspects of classical homotopy theory}, Alg. Top., Proc. Seattle 1985 (H. Miller and D. Ravenel, eds.), Lec. Notes in Math., vol. 1286, Springer, 1987, pp. 1--92.

\bibitem{C Fibration} F. R. Cohen, \emph{Fibration and product decompositions}, Handbook of Algebraic Topology, Ed. I. James, Elsevier Science B.V., 1995, 1175--1208. 

\bibitem{C 2-primary} F. R. Cohen, \emph{Two-primary analogues of Selick's theorem and the Kahn-Priddy theorem for the $3$-sphere}, Topology \textbf{23} (1984), 401--421.

\bibitem{CS} F. R. Cohen and P. S. Selick, \emph{Splittings of two function spaces}, Quart. J. Math. Oxford \textbf{41} (1990), 145--153.

\bibitem{G} B. Gray, \emph{On the iterated suspension}, Topology \textbf{27} (1988), 301--310.

\bibitem{GTZ} J. Grbi\'c, S. Theriault and H. Zhao, \emph{Properties of Selick's filtration of the double suspension $E^2$}, J. Topol. Anal. \textbf{06} (2014), 421--440.

\bibitem{HHR} M. A. Hill, M. J. Hopkins and D. C. Ravenel, \emph{On the nonexistence of elements of Kervaire invariant one}, Ann. Math. \textbf{184} (2016), 1--262.

\bibitem{M attaching} J. Mukai, \emph{On the attaching map in the Stiefel manifold of $2$-frames}, Math. J. Okayama Univ. \textbf{33} (1991), 177--188.

\bibitem{MSh} J. Mukai and T. Shinpo, \emph{Some homotopy groups of the mod $4$ Moore space}, J. Fac. Sci. Shinshu Univ. \textbf{34} (1999), 1--14.

\bibitem{MSk} J. Mukai and A. Skopenkov, \emph{A direct summand in a homotopy group of the mod $2$ Moore space}, Kyushu J. Math. \textbf{58} (2004), 203--209.

\bibitem{Ra} D. C. Ravenel, \emph{The non-existence of odd primary Arf invariant elements in stable homotopy theory}, Math. Proc. Cambridge Philos. Soc. \textbf{83} (1978), 429--443.

\bibitem{Ri} W. Richter, \emph{A conjecture of Gray and the $p^{th}$ power map on $\Omega^2S^{2np+1}$}, Proc. Amer. Math. Soc. \textbf{142} (2014), 2151--2160.

\bibitem{S Odd} P. S. Selick, \emph{Odd primary torsion in $\pi_k(S^3)$}, Topology \textbf{17} (1978), 407--412.

\bibitem{S Decomposition} P. S. Selick, \emph{A decomposition of $\pi_\ast(S^{2p+1};\mathbb{Z}/p\mathbb{Z})$}, Topology \textbf{20} (1981), 175--177.

\bibitem{S Reformulation} P. S. Selick, \emph{A reformulation of the Arf invariant one mod $p$ problem and applications to atomic spaces}, Pac. J. Math. \textbf{108} (1983), 431--450.

\bibitem{S Indecomposability} P. S. Selick, \emph{Indecomposability of the Stiefel manifolds $V_{m,3}$}, Topology \textbf{27} (1988), 479--485.

\bibitem{T 2-primary} S. Theriault, \emph{$2$-primary Anick fibrations}, Journal of Topology \textbf{4} (2011), 479--503.

\bibitem{T A case} S. Theriault, \emph{A case when the fibre of the double suspension is the double loops on Anick's space}, Can. Math. Bull. \textbf{53} (2010), 730--736.

\bibitem{T A new proof} S. Theriault, \emph{A new proof of the odd primary homotopy exponent of spheres}, Manuscripta Math. \textbf{139} (2012), 137--151.

\bibitem{T Anick's} S. Theriault, \emph{Anick's fibration and the odd primary homotopy exponent of spheres}, 	arXiv:0803.3205.

\bibitem{W} J. Wu, \emph{Homotopy theory of the suspensions of the projective plane}, Memoirs AMS \textbf{162} (2003), No. 769.

\end{thebibliography}
\end{document}